\newtheorem{lemma}{Lemma}
\newtheorem{corollary}[lemma]{Corollary}
\newtheorem{theorem}[lemma]{Theorem}
\theoremstyle{remark}
\newcommand{\N}{\mathbb{N}}
\newcommand{\ACAZERO}{\text{ACA}_0}
\newcommand{\ATRZERO}{\text{ATR}_0}
\newcommand{\PICAZERO}{\Pi_1^1\text{-CA}_0}
\newcommand{\PICA}{\Pi_1^1\text{-CA}}
\begin{document}

\onehalfspacing

\title{Upper bounds on the graph minor theorem}
\author{Martin Krombholz and Michael Rathjen}
\date{}

\maketitle

\begin{abstract}
Lower bounds on the proof-theoretic strength of the graph minor theorem were found over 30 years ago by \textcite{friedmanrobertsonseymour87}, but upper bounds have always been elusive. We present recently found upper bounds on the graph minor theorem and other theorems appearing in the Graph Minors series. Further, we give some ideas as to how the lower bounds on some of these theorems might be improved.

\end{abstract}

\section{Introduction}

Graph theory supplies many well-quasi-ordering theorems for proof theory to study. The best known of these is Kruskal's theorem, which as discovered independently by \textcite{schmidt79} and Friedman (published by \textcite{simpson85}) possesses an unusually high proof-theoretic strength that lies above that of $\ATRZERO$. This result was then extended by Friedman to extended Kruskal's theorem, a form of Kruskal's theorem that uses labelled trees for which the embedding has to obey a certain gap-condition, which was shown to have proof-theoretic strength just above even the theory of $\PICAZERO$, the strongest of the five main theories considered in the research program known as reverse mathematics. 

Reverse mathematics (RM) strives to classify the strength of particular theorems, or bodies of theorems, of ``ordinary'' mathematics by means of isolating the essential set existence principles used to prove them, mainly in the framework of subsystems of second order arithmetic. The program is often summarized by saying that there are just five systems, known as the ``Big Five'', that are sufficient for this classification. The picture of RM that we currently see, though, is more complicated: 

\begin{enumerate} 
\item Those parts of mathematics that have been analyzed in RM, are mostly results from the 19th century and the early 20th century with rather short proofs (varying from half a page to a few pages in length). By contrast, e.g., the large edifice of mathematics that Wiles' proof of {\em Fermat's Last Theorem} utilizes has not been analyzed in detail. 

\item By now there are quite a number of theorems that do not fit the mold of the Big Five. For instance, Ramsey's theorem for pairs, Kruskal's theorem and the graph minor theorem do not equate to any of them. For several others, such as Hindman's theorem, this is still an open question. 

\item There are areas of mathematics where complicated double, triple and more times nested transfinite inductions play a central role. Such proof strategies are particularly frequent in set theory (e.g. in fine structure theory and combinatorial theorems pertaining to $L$) and in higher proof theory (e.g. in the second predicative cut elimination theorem and the impredicative cut elimination and collapsing theorems). As RM is usually presented, one might be tempted to conclude that such transfinite proof modes are absent from or even alien to ``ordinary'' mathematics. However, they are used in the proof of the graph minor theorem. Are they really necessary for its proof? 

\end {enumerate} 

In this paper we will be concerned with the proof of the graph minor theorem, which is a fairly recent result. It has a very complicated and long proof that features intricate transfinite inductions. In particular, we will be analyzing these inductions and classify them according to principles that are familiar from proof theory and the foundations of mathematics. As to the importance attributed to the graph minor theorem, let's quote from a book on Graph Theory \textcite{diestel17}, p. 249. 
\begin{quote} 
\em Our goal $[\ldots]$ is a single theorem, one which dwarfs any other result in graph theory and may doubtless be counted among the deepest theorems that mathematics has to offer: {\em in every infinite set of graphs there are two such that one is a minor of the other}. This {\em graph minor theorem}, inconspicuous though it may look at first glance, has made a fundamental impact both outside graph theory and within. Its proof, due to Neil Robertson and Paul Seymour, takes well over 500 pages.
\end{quote} 

The starting point of this grand proof is the bounded graph minor theorem, i.e. the graph minor theorem restricted to those graphs of bounded ``tree-width''. The bounded graph minor theorem was connected to Friedman's extended Kruskal's theorem by \textcite{friedmanrobertsonseymour87}, and the two were even shown to be equivalent. This provided a natural example of a theorem of combinatorial mathematics that has extremely high proof-theoretic strength, and at the same time gave a lower bound on the graph minor theorem. While the precise proof-theoretic strength of the bounded graph minor theorem was established by \textcite{friedmanrobertsonseymour87}, the same was not the case for the full graph minor theorem, for which not even an upper bound was found, which no doubt was due to the fact that the proof's over 500 pages of complicated combinatorial arguments. In the following, we will thus outline how the graph minor theorem and other important theorems of the Graph Minors series, like the immersion theorem, can be proved in $\PICAZERO$ with the additional principles of $\Pi^1_3$-induction and $\Pi^1_2$-bar induction.

\section{Well-quasi-ordering theorems of the Graph Minors series}

The relations of minor and immersion can be understood as finding a certain expansion of one graph $G_1$ in another graph $G_2$. All graphs in this paper are finite and without loops unless noted otherwise, and we denote the vertex set of a graph $G$ by $V(G)$ and its edge set by $E(G)$. For the minor relation, define a minor-expansion of $G_1$ to be a function $f:G_1\longrightarrow G_2$ so that $v\in V(G_1)$ gets mapped to a connected subgraph $f(v)\subseteq G_2$ so that $f(v)\cap f(u)=\emptyset$ if $u\neq v$, and each edge $e\in E(G_1)$ gets mapped injectively to an edge $f(e)\in E(G_2)$ so that if the endpoints of $e$ are $u$ and $v$, then $f(e)$ connects vertices $u'\in f(u)$ and $v'\in f(v)$. If an expansion of $G_1$ is a subgraph of $G_2$, $G_1$ is said to be a minor of $G_2$, denoted $G_1\leq G_2$. An immersion relation between graphs $G_1$ and $G_2$ is similarly witnessed by an immersion-expansion $f:G_1\longrightarrow G_2$ so that vertices of $G_1$ are mapped injectively to vertices of $G_2$, and so that an edge $e$ with endpoints $u$ and $v$ is mapped to a path $f(e)$ in $G_2$ between $f(u)$ and $f(v)$ so that for distinct edges $e_1,e_2\in E(G_1)$ the paths $f(e_1)$ and $f(e_2)$ are edge-disjoint (but may intersect at vertices), i.e. $E(f(e_1))\cap E(f(e_2)) = \emptyset$. The graph minor and immersion theorem are then the following theorems.

\begin{theorem}[Graph minor theorem, \textcite{graphminorsxx}]
For every sequence $\left\langle G_i:i\in\N\right\rangle$ of graphs there are $i < j$ so that $G_i$ is a minor of $G_j$.

\end{theorem}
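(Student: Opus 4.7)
The strategy is to formalize the overall architecture of the Robertson--Seymour proof while carefully tracking the logical complexity of each ingredient, with the goal of carrying it out in $\PICAZEROONETWOBIONETHREEIND$. The starting point is the bounded graph minor theorem, which by \textcite{friedmanrobertsonseymour87} is provably equivalent to $\EKT$ and thus handles any sub-sequence whose tree-width is uniformly bounded by some fixed $k$. This base case is already of extremely high proof-theoretic strength but does not by itself yield the full theorem, since a counterexample sequence need not have bounded tree-width.

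The main additional ingredient is the \emph{structure theorem for $H$-minor-free graphs}: for each fixed graph $H$, every $H$-minor-free graph admits a tree-decomposition whose bags are ``almost embeddable'' in a fixed surface, in the sense that they are embedded modulo a bounded number of apex vertices and a bounded number of vortices of bounded pathwidth. The first task is therefore to set up tree-decompositions, tangles, branch-decompositions, and almost-embeddings inside second-order arithmetic, and then to reorganize Robertson and Seymour's iteration of the grid theorem and the two-paths theorem so that each step is a well-founded recursion whose characteristic predicate is at most $\Pi^1_2$. The $\Pi^1_2$-bar induction axiom is then what lets us carry these recursions out; the $\Pi^1_3$-induction schema is needed to iterate case analyses on natural-number parameters such as the bound on tangle order.

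Given the structure theorem, the theorem reduces to a well-quasi-ordering result for graphs that are almost embedded in a fixed surface. This is proved by a minimal bad sequence argument: starting from a hypothetical bad sequence, extract a minimal one using a $\Pi^1_1$-comprehension definable minimization, and then, on the ``pieces'' of the tree-decomposition, apply the bounded tree-width case ($\EKT$) together with Kruskal-type WQO arguments for graphs on a fixed surface to derive a contradiction. The full minor theorem then follows by one further minimal bad sequence step applied to $\langle G_i\rangle$ itself: in a minimal bad sequence, the initial term $G_0$ cannot be a minor of any later $G_j$, so all later $G_j$ are $G_0$-minor-free and fall within the surface case already handled.

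The hard part will be the structure theorem. Its proof in the original series spans hundreds of pages of case analyses and is based on transfinite recursions over tangle orders and over a ``depth'' parameter that, at first reading, appears to reach well beyond $\omega$. The central technical work of the plan is to show that, despite appearances, these recursions admit $\Pi^1_2$ characteristic predicates so that $\Pi^1_2$-bar induction suffices, and that the surrounding combinatorial case distinctions can be arranged to need at most $\Pi^1_3$-induction on $\N$. A secondary obstacle is bookkeeping: throughout the argument, objects such as tangles, optimal decompositions, and minimal bad sequences must be produced by $\Pi^1_1$-comprehension rather than genuinely stronger principles, which forces a uniform and at times ad-hoc choice of witnesses in arguments that are classically presented non-constructively.
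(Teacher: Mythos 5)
Your overall architecture matches Robertson--Seymour's (excluded-minor structure theorem for $H$-minor-free graphs, a WQO theorem for almost-embeddable graphs, and the final reduction via a hypothetical bad sequence), but you misplace where the strong proof principles are actually consumed, and that misplacement is the whole content of the paper's analysis. You concentrate the technical difficulty on the structure theorem, proposing to reorganize recursions over tangle orders and depth parameters into $\Pi^1_2$-bar inductions and to spend $\Pi^1_3$-induction on the surrounding case analyses. The paper makes the opposite observation: since surfaces are determined by their fundamental polygons, a graph embedded in a surface can be coded by a natural number, so the entire proof of the excluded-minor structure theorem has no infinitary content at all and formalizes already in $\ACAZERO$. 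None of the strong principles go there.

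Where $\Pi^1_2$-bar induction is really needed is Graph Minors XIX, the WQO theorem for graphs on a fixed surface whose edges carry labels from well-quasi-orders --- exactly the step you dismiss as ``a minimal bad sequence argument $\ldots$ using a $\Pi^1_1$-comprehension definable minimization.'' That does not work. The minimality in that argument is not over the countable sequence of graphs but over the labelling \emph{well-quasi-orders}, with respect to the initial-ideal relation $\preceq$ and a refinement relation on finite sequences of WQOs. These are second-order objects whose predecessor classes may be uncountable, so the induction the original proof performs is not even literally an instance of the standard bar-induction scheme; the paper's genuine technical work is to extract restricted relations $\prec_1$ and $\prec_2$, show that only these are used, and then prove that ordinary $\Pi^1_2$-bar induction on the Smyth order $\leq_1$ on $[X]^{<\omega}$ (and on $\leq_2$ on $([X]^{<\omega})^{<\omega}$) simulates them. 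The $\Pi^1_3$-induction, in turn, is spent in Graph Minors IV on the existence of minimal bad sequences in the bounded-tree-width case, not on iterating case analyses over tangle order. As written, your plan both overshoots (spending bar induction on a part that needs only $\ACAZERO$) and silently undershoots (treating the minimal-WQO step as a $\Pi^1_1$-comprehension minimization, which it is not), so it would not in fact yield the upper bound $\PICAZEROONETWOBIONETHREEIND$.
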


\begin{theorem}[Immersion theorem, \textcite{graphminorsxxiii}]
For every sequence $\left\langle G_i:i\in\N \right\rangle$ of graphs there are $i < j$ so that there is an immersion of $G_i$ into $G_j$.

\end{theorem}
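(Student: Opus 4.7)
The plan is to follow the overall strategy of Robertson and Seymour: reduce the general immersion theorem to a bounded case that is accessible through a Kruskal-type labelled-tree embedding theorem, and then bootstrap from bounded to unbounded using a structure theorem together with a minimal-bad-sequence argument. At every step the ingredients need to be verified to fit within $\PICAZEROONETWOBIONETHREEIND$.

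First I would tackle the bounded case: the immersion theorem restricted to graphs of bounded \emph{tree-cut width} (the immersion analogue of tree-width). Such graphs admit a canonical encoding as finite labelled trees in which the immersion relation corresponds to an embedding of trees satisfying a gap condition on the labels. This reduces the bounded immersion theorem to (a variant of) Friedman's extended Kruskal theorem $\EKT$, already known to be provable just above $\PICAZERO$. Setting up the encoding and invoking $\EKT$ should live in $\PICAZERO$ together with a modest amount of $\Pi^1_1$-bar induction.

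Second, I would lift the bounded version to the full theorem. Suppose toward contradiction that $\langle G_i:i\in\N\rangle$ is a bad sequence, i.e.\ no $G_i$ immerses into any later $G_j$. Fixing $H=G_0$ and applying the structure theorem for graphs excluding a fixed immersion, I would decompose each $G_i$ (for $i\geq 1$) into pieces of bounded complexity arranged along a tree. A minimal-bad-sequence construction on the pieces, combined with Higman-style wqo closure under tree assembly and under finite labellings, should then contradict the bounded case. This is the step where the heavier principles enter: formalising the minimal bad sequence at the required definability level calls for $\Pi^1_2$-bar induction, while the outer induction on the complexity bound coming from the structure theorem needs $\Pi^1_3$-induction.

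The main obstacle will be the structure theorem itself and the bookkeeping of its definability complexity. The original Robertson–Seymour argument chains together the excluded-grid theorem, local structural results, and repeated refinements of a tree-cut decomposition, and it is not obvious a priori that these reductions are uniformly $\Pi^1_2$ in the relevant parameters; pinning this down is the most technical piece. A secondary difficulty will be verifying that the minimal bad sequence, together with the wqo closure operations stacked on top of it, can be presented at exactly the $\Pi^1_2$ level, so that $\Pi^1_2$-BI — rather than a properly stronger bar induction — genuinely suffices.
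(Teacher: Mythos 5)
The paper does not itself prove the immersion theorem; it is quoted as a result of Robertson and Seymour from Graph Minors XXIII, and the paper's contribution is a proof-theoretic analysis showing that the \emph{Robertson--Seymour} argument can be formalised in $\PICAZERO+\Pi^1_2\text{-BI}+\Pi^1_3\text{-IND}$. Your proposal is therefore best read as a sketch of an alternative route to the theorem. At that level, two concrete gaps stand out. First, the scaffolding you use --- \emph{tree-cut width} and a ``structure theorem for graphs excluding a fixed immersion'' --- is not part of the Robertson--Seymour proof at all; these tools were introduced by Wollan well after Graph Minors XXIII, and the excluded-immersion structure theorem was never part of the series. What the paper (following Graph Minors XXIII) actually relies on is the \emph{minor}-theoretic surface structure theory built up in Graph Minors IV--XX, with the immersion theorem obtained on top of that via an additional $\Pi^1_2$-bar induction of the same ``minimal bad wqo of labels'' type that appears in Graph Minors XIX. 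There is no independent immersion-specific structure theory being invoked.

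Second, your base case asserts that graphs of bounded tree-cut width ``admit a canonical encoding as finite labelled trees in which the immersion relation corresponds to an embedding of trees satisfying a gap condition,'' so that the bounded immersion theorem reduces to (a variant of) $\EKT$. This is a precise analogue of the Friedman--Robertson--Seymour encoding for bounded \emph{tree-width} graphs under the \emph{minor} relation, but no such encoding is established for tree-cut decompositions under immersion, and it is not obvious how the gap condition would arise: tree-cut decompositions control edge-cuts between bags rather than vertex separators, and immersions route edges along paths that may re-enter bags in ways that break the tree order. Without this reduction, the bounded case is unsupported, and even granting it, the two steps would require an independent verification that they fit within $\Pi^1_2$-BI plus $\Pi^1_3$-IND, which the paper only undertakes for the actual Robertson--Seymour argument. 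So the high-level silhouette (bounded case, structure theorem, minimal bad sequence, matching proof-theoretic budget) is right, but the concrete machinery you name is neither the machinery of the paper nor known to carry the weight you place on it.
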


The proof of the graph minor theorem can be divided into two major steps. First, the excluded minor theorem is proved, which takes up most of the Graph Minors series. The excluded minor theorem says that if one graph $G$ does not contain another graph $H$ as a minor, then $G$ has to have a certain structure, namely that it can be decomposed into parts which are connected in a tree-like shape and can almost be embedded into a surface into which $H$ can not be embedded. This is then used as follows: In a proof of the graph minor theorem, for any sequence of graphs $\left\langle G_1, G_2,\ldots\right\rangle$ one may assume that $G_1$ is not a minor of any $G_j$, $j > 1$, as otherwise the graph minor theorem holds. Thus, it suffices to prove the graph minor theorem for any sequence of graphs possessing the structure obtained by applying the excluded minor theorem for $G_1$, for any such $G_1$. This means that it is enough to prove the graph minor theorem for graphs which consist of parts connected in a tree-like shape that are almost embeddable into some fixed surface, which is the second major step of the proof of the graph minor theorem.

The proof of the excluded minor theorem is not very complex from a metamathematical point of view. This is due to the fact that surfaces are uniquely determined by their fundamental polygons, and that graph embeddings on any surface can thus be represented by a natural number encoding a graph drawing with rational coordinates in this fundamental polygon. With this approach, the entire proof of the excluded minor theorem does not feature any infinite objects nor any infinite proof techniques, and it is straightforward to carry it out in $\ACAZERO$, which will be our base theory in the following. The only papers of the Graph Minors series that use more advanced proof techniques are Graph Minors IV \cite*{graphminorsiv}, VIII \cite*{graphminorsviii}, XVIII \cite*{graphminorsxviii}, XIX \cite*{graphminorsxix}, XX \cite*{graphminorsxx} and XXIII \cite*{graphminorsxxiii}.

Graph Minors IV \cite*{graphminorsiv} proves in a sense an early version of the graph minor theorem for graphs with a certain structure as described above, namely the graph minor theorem for graphs that have bounded tree-width, a property which is defined in terms of tree-decompositions. A tree-decomposition of a graph $G$ is essentially a decomposition of $G$ into parts that are connected in a tree-like shape, i.e. a tree-decomposition of $G$ consists of a tree $T$ and for every $t\in V(T)$ a subgraph $G_t$ of $G$ so that 
\begin{itemize}
\item $\bigcup_{t\in V(T)}G_t = G$, and

\item if an edge $e$ of $T$ has endpoints $t_1$ and $t_2$, and $T_1$ and $T_2$ are the two components of $T$ obtained by removing $e$ from $T$, then every path in $G$ from some $v\in \bigcup_{t\in V(T_1)}G_t$ to some $u\in \bigcup_{t\in V(T_2)}G_t$ has to contain a vertex of $G_{t_1}\cap G_{t_2}$.

\end{itemize}
The width of such a tree-decomposition is then defined to be $\max_{t\in V(T)}\left|V(G_t)\right| - 1$. The tree-width $tw(G)$ of $G$ is the minimum width of all its tree-decompositions, and the bounded graph minor theorem can be stated as follows.

\begin{theorem}[Bounded graph minor theorem, \textcite{graphminorsiv}]
Let $n$ be a natural number, then in any sequence $\left\langle G_i:i\in\N\right\rangle$ of graphs so that $tw(G_i)\leq n$ for every $i\in\N$, there are $G_i$ and $G_j$ with $i < j$ so that $G_i$ is a minor of $G_j$. 
\end{theorem}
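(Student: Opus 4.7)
The plan is to reduce the bounded graph minor theorem to Friedman's extended Kruskal's theorem (EKT), following the strategy of \textcite{friedmanrobertsonseymour87}. The intuition is that a graph of tree-width at most $n$ is essentially a tree of ``bags'' of size at most $n+1$, glued together along intersections, so minor-containment of such graphs should correspond to a structured (gap-)embedding of the associated labelled trees.

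First I would make the reduction precise. To each graph $G_i$ with $tw(G_i) \leq n$, assign a tree-decomposition $(T_i,(G_t)_{t \in V(T_i)})$ of width $\leq n$; since we work in $\ACAZERO$, a uniform arithmetical choice of such decomposition is available given the graph. From this I would build a labelled tree $\widehat{T}_i$ whose underlying tree is (a refined version of) $T_i$ and whose label at a node $t$ encodes the bag $G_t$ together with the identification data specifying, for each edge of $T_i$ incident to $t$, which vertices of $G_t$ lie in the shared separator $G_t \cap G_{t'}$. Since all bags have $\leq n+1$ vertices, the set of possible labels is finite up to isomorphism and thus trivially well-quasi-ordered. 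Crucially, one wants the gap condition of EKT to correspond to preservation of the ``separator layering'': an edge of $\widehat{T}_i$ from $t$ to its parent $t'$ is assigned a label (a ``gap'') recording the separator $G_t \cap G_{t'}$, and EKT's gap condition forces an embedding to respect these separator structures across intermediate nodes.

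Once the encoding is in place, I would apply EKT to $\langle \widehat{T}_i : i \in \N\rangle$ to obtain indices $i < j$ and a gap-preserving embedding $\varphi: \widehat{T}_i \hookrightarrow \widehat{T}_j$. The main combinatorial step is to decode $\varphi$ into a minor-expansion $f: G_i \longrightarrow G_j$: for each vertex $v \in V(G_i)$, one collects all nodes $t \in V(\widehat{T}_i)$ with $v \in G_t$; the image $f(v)$ is then built by uniting the images under $\varphi$ of these bag-portions and connecting them along paths in $G_j$ that lie within the bags traversed by the gap-condition (so that the connectivity of $f(v)$ is guaranteed). Edges of $G_i$, which live inside a single bag $G_t$, can then be routed to corresponding edges of $G_j$ inside the bag at $\varphi(t)$.

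The main obstacle will be making the encoding step cohere with the gap condition tightly enough that the decoded $f$ really is a minor-expansion: the separator-preservation imposed by gaps must give, simultaneously, (a) connectedness of each $f(v)$, (b) disjointness $f(u)\cap f(v)=\emptyset$ for $u\neq v$, and (c) the correct routing of edges. This is the step where one has to design the labels and the tree-refinement carefully; in particular, one typically has to subdivide the tree-decomposition so that bags grow and shrink vertex-by-vertex along each edge of $\widehat{T}_i$, so that the gap condition tracks individual vertices across the tree rather than whole separators. Once this bookkeeping is in place, the combinatorial verification that $\varphi$ yields a minor is routine and formalizable in $\ACAZERO$, so the entire proof sits on top of EKT and thereby inherits its proof-theoretic upper bound.
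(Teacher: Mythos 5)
The paper does not give a proof of the bounded graph minor theorem; it is cited as a result of \textcite{graphminorsiv}, and the surrounding discussion concerns its metamathematical analysis. Two proof routes are mentioned there: the Robertson--Seymour proof, which proceeds via a minimal bad sequence argument (formalizable using a $\Pi^1_3$-induction), and the Friedman--Robertson--Seymour route, which proves each fixed tree-width instance in $\PICAZERO$ and then applies $\Pi^1_1$-reflection. Neither proceeds by a direct combinatorial reduction of the bounded graph minor theorem to $\EKT$, which is what you propose.

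Your direction of reduction is the problematic one. In \textcite{friedmanrobertsonseymour87} the combinatorial encoding goes the \emph{other} way: specially constructed, rigid graphs $J_T$ built from labelled trees are shown to satisfy ``$J_{T_1}$ is a minor of $J_{T_2}$ implies the gap-embeddability (or ordinal comparison) of $T_1$ and $T_2$.'' That rigidity is essential: the graphs $J_T$ are crafted precisely so that \emph{any} minor embedding is forced to track the tree structure. Nothing analogous holds for arbitrary graphs of tree-width $\leq n$: a tree-decomposition is not canonical, a minor embedding $G_i\leq G_j$ need not respect either tree-decomposition, and conversely a gap-embedding of two tree-decompositions-as-labelled-trees will not in general induce a minor embedding of the underlying graphs, because the numerical gap condition of $\EKT$ simply does not say anything about separator compatibility. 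You call this ``the main obstacle'' and hope it can be handled by careful bookkeeping, but it is not a bookkeeping issue: $\EKT$'s gap condition concerns a single linear order of integer vertex labels compared by $\geq$ on paths to infima, whereas what you need preserved is a family of bounded-size injections between bags, consistent along the tree and stable under the images skipping nodes. That is a structurally different condition, and encoding it as a gap condition is exactly the nontrivial content that \textcite{friedmanrobertsonseymour87} do not claim, and indeed avoid, in the upper-bound direction.

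For the equivalence of the bounded graph minor theorem with $\EKT$, the known argument passes through the proof-theoretic ordinal $\psi_0(\Omega_\omega)$ and $\Pi^1_1$-reflection over $\PICAZERO$, not through a sequence-to-sequence combinatorial translation. So the proposal is not a variant of the established proof: it is an unproved alternative, and its central step is left open. If you want a formalizable proof in this framework, the route suggested by the paper is to reproduce the minimal-bad-sequence argument of \textcite{graphminorsiv} inside $\PICAZERO+\Pi^1_3$-IND (or, for each fixed $n$, inside $\PICAZERO$ and then appeal to $\Pi^1_1$-reflection), rather than to reduce to $\EKT$ directly.
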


The bounded graph minor theorem has been analyzed from a metamathematical perspective by \textcite{friedmanrobertsonseymour87}, who determined that its proof-theoretic strength lies just above that of $\PICAZERO$. They observed that the bounded graph minor theorem can be proved for each individual tree-width in $\PICAZERO$, and since the bounded graph minor theorem is a $\Pi^1_1$-statement, that an application of $\Pi^1_1$-reflection for $\PICAZERO$ thus suffices to prove the bounded graph minor theorem. This approach circumvents a $\Pi^1_3$-induction, which is roughly used to show that some minimal bad sequence always exists under certain circumstances, and \textcite{friedmanrobertsonseymour87} in turn showed that no theory of lower proof-theoretic strength than $\PICAZERO$ augmented with $\Pi^1_1$-reflection for $\PICAZERO$ can prove the bounded graph minor theorem. There is however no such proof for some theorems of Graph Minors IV \cite*{graphminorsiv} which are more important for the rest of the Graph Minors series, and for these theorems only the upper bound of $\PICAZERO + \Pi^1_3\text{-IND}$ is known. \textcite{friedmanrobertsonseymour87} further showed that the bounded graph minor theorem is equivalent to the planar graph minor theorem, i.e. the graph minor theorem for those graphs which can be drawn (or equivalently, embedded) in the plane.

Graph Minors VIII \cite*{graphminorsviii} proves a generalization of the planar graph minor theorem. Define for every surface $\Sigma$ the $\Sigma$-graph minor theorem: 

\begin{theorem}[$\Sigma$-graph minor theorem]
For every sequence $\left\langle G_i:i\in\N \ldots\right\rangle$ of graphs that can be drawn in $\Sigma$ without crossings there are $i < j$ so that $G_i\leq G_j$.

\end{theorem}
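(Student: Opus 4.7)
The plan is to proceed by induction on a suitable measure of complexity of the surface $\Sigma$, e.g.\ its Euler genus. The base case is $\Sigma$ being the sphere (equivalently, the plane), for which the $\Sigma$-graph minor theorem reduces to the planar graph minor theorem. By the result of \textcite{friedmanrobertsonseymour87} mentioned above, the planar graph minor theorem is equivalent to the bounded graph minor theorem, so the base case can be discharged using the results of Graph Minors IV within the ambient theory $\PICAZEROONETWOBIONETHREEIND$.

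For the inductive step, fix a surface $\Sigma$ of positive genus and suppose the $\Sigma'$-graph minor theorem has been established for every surface $\Sigma'$ of strictly smaller Euler genus. Assume for contradiction that $\langle G_i:i\in\N\rangle$ is a bad sequence of $\Sigma$-embeddable graphs, and extract a \emph{minimal} bad sequence $\langle H_i:i\in\N\rangle$ in the standard sense (minimising $|V(H_i)|+|E(H_i)|$ at each step). The existence of such a sequence is the point where the $\Pi^1_2$-bar-induction principle is needed, as the Nash-Williams minimal bad sequence argument is precisely of that logical complexity when applied to this class of graphs.

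Having fixed the minimal bad sequence, I would now exploit badness: since $H_1$ is $\Sigma$-embeddable and $H_1\not\leq H_j$ for all $j>1$, each $H_j$ is a $\Sigma$-embeddable graph excluding the fixed graph $H_1$ as a minor. The key structural ingredient, which is the substance of Graph Minors VIII, is that every such $H_j$ admits a tree-decomposition whose parts either (a) have tree-width bounded by some constant depending only on $H_1$, or (b) are embeddable in a surface of strictly smaller Euler genus than $\Sigma$ (after trivial surgery near a chosen non-contractible curve along which $H_1$ forces $H_j$ to be cut). One then well-quasi-orders these decomposed graphs by combining the bounded graph minor theorem (for the bounded-tree-width parts), the inductive hypothesis (for the lower-genus parts), and a Kruskal-type argument to handle the tree-like combination — yielding $i<j$ with $H_i\leq H_j$, contradicting badness.

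The main obstacle is twofold. First, the structural lemma (b) above is the combinatorial heart of Graph Minors VIII, and one has to verify that its proof can be formalised in $\ACAZERO$ (or at worst $\PICAZERO$) without recourse to infinite objects, paralleling the treatment of the excluded minor theorem sketched in the introduction. Second, and more delicate from the proof-theoretic side, is keeping track of the cumulative strength of the nested minimal bad sequence constructions: each level of induction on the genus invokes another $\Pi^1_2$-bar induction, so one needs to ensure that the overall argument still lives inside $\PICAZEROONETWOBIONETHREEIND$ rather than escalating into stronger transfinite induction principles.
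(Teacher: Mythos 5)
Your proposal takes a genuinely different route from the paper, and it misses the key metamathematical observation that the paper relies on for this particular theorem. The paper does not reprove the $\Sigma$-graph minor theorem by a fresh nested minimal-bad-sequence argument; it instead observes that for \emph{each fixed} surface $\Sigma$ the $\Sigma$-graph minor theorem is already provable in $\PICAZERO$ (by extending the Friedman--Robertson--Seymour observation through Graph Minors VII/VIII), and then notes that since ``for all $\Sigma$, the $\Sigma$-graph minor theorem holds'' is a $\Pi^1_1$ statement, a single application of $\Pi^1_1$-reflection for $\PICAZERO$ yields the universal version. This is precisely the trick the paper highlights as ``circumventing a $\Pi^1_3$-induction.'' As a consequence, the $\Sigma$-graph minor theorem (and even $\forall\Sigma$-GMT) is \emph{equivalent} over $\ACAZERO$ to the planar and bounded graph minor theorems, i.e.\ it sits at the $\psi_0(\Omega_\omega)$ level just above $\PICAZERO$ — nowhere near $\PICAZEROONETWOBIONETHREEIND$.

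By contrast, your plan runs a Nash--Williams minimal bad sequence argument and an induction on Euler genus, invoking $\Pi^1_2$-bar induction at every level; you even flag that ``keeping track of the cumulative strength of the nested minimal bad sequence constructions'' is a worry. That worry is exactly what the paper's reflection trick dissolves: for a fixed $\Sigma$ the genus is a fixed natural number, the induction on genus is a bounded, provably-terminating iteration inside $\PICAZERO$, and the only thing that bumps the strength up is the quantification over all $\Sigma$, which $\Pi^1_1$-reflection handles cheaply. Your approach, even if it could be made combinatorially precise, drastically overshoots the actual reverse-mathematical cost of this theorem. (As a secondary point, your structural claim (b) — that a $\Sigma$-embeddable graph excluding $H_1$ decomposes into bounded-tree-width pieces and lower-genus pieces — is not how Graph Minors VIII actually proceeds; the wqo argument there is built on the disjoint-paths machinery of Graph Minors VII rather than on a tree-decomposition of the form you describe, and the decomposition you sketch is closer in spirit to Graph Minors XVI and the later structure theorem.)
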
 

If $S^2$ denotes the sphere, then the planar graph minor theorem is just the $S^2$-graph minor theorem, since embeddability in the sphere and drawability in the plane are equivalent. Denote by $\forall\Sigma$-GMT the statement that the $\Sigma$-graph minor theorem holds for every surface $\Sigma$. It is shown in Graph Minors VIII that the $\Sigma$-graph minor theorem and $\forall\Sigma$-GMT are indeed true, and it can further be shown that both of these theorems are equivalent to the planar and hence also the bounded graph minor theorem. This is done by extending the proof that each instance of the bounded graph minor theorem is provable in $\PICAZERO$ all the way into Graph Minors VII \cite*{graphminorsviii}, so that it can be shown that for each surface $\Sigma$, the $\Sigma$-graph minor theorem is provable in $\PICAZERO$. An application of $\Pi^1_1$-reflection for $\PICAZERO$ then establishes the equivalence of $\forall\Sigma$-GMT and the planar graph minor theorem, and hence also that of $\forall\Sigma$-GMT and the bounded graph minor theorem. The results of \textcite{friedmanrobertsonseymour87} can thus be extended as follows, see \textcite{krombholz18}.

\begin{theorem}
The following are equivalent over $\ACAZERO$:
\begin{itemize}
\item The well-orderedness of the ordinal $\psi_0(\Omega_\omega)$,

\item Friedman's extended Kruskal's theorem,

\item the bounded graph minor theorem,

\item the planar graph minor theorem,

\item the $\Sigma$-graph minor theorem, for any surface $\Sigma$, and

\item $\forall\Sigma$-GMT.

\end{itemize}

\end{theorem}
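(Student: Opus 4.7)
The plan is to show the six statements are equivalent by establishing a cycle of implications that exploits the already-known Friedman--Robertson--Seymour equivalences plus a $\Pi^1_1$-reflection argument. The easy half is the chain
\[
\forall\Sigma\text{-GMT} \;\Longrightarrow\; \Sigma\text{-GMT} \;\Longrightarrow\; \text{planar GMT} \;\Longrightarrow\; \text{bGMT} \;\Longrightarrow\; \text{EKT} \;\Longrightarrow\; \text{WO}(\psi_0(\Omega_\omega)),
\]
where the first two steps are instances/specializations (taking $\Sigma = S^2$), and the remaining three implications are due to \textcite{friedmanrobertsonseymour87}. Closing the loop from $\text{WO}(\psi_0(\Omega_\omega))$ back through EKT, bGMT, and planar GMT also uses only \textcite{friedmanrobertsonseymour87}, so the real work is to extract $\forall\Sigma$-GMT from bGMT (equivalently, from planar GMT) over $\ACAZERO$.

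First I would recall that by \textcite{friedmanrobertsonseymour87} the bounded graph minor theorem is equivalent over $\ACAZERO$ to $\Pi^1_1$-reflection for $\PICAZERO$, via the observation that bGMT-at-width-$n$ is provable in $\PICAZERO$ for each standard $n$ and bGMT itself is $\Pi^1_1$. I would then argue that for each specific surface $\Sigma$ (coded, say, by its fundamental polygon), the $\Sigma$-GMT can likewise be proved in $\PICAZERO$ by carrying out the combinatorial argument of Graph Minors IV--VII through VIII \cite*{graphminorsviii} internally. Because the $\Sigma$-GMT for a fixed standard $\Sigma$ is $\Pi^1_1$, a uniform formalization yields that the scheme ``for every standard surface code $\Sigma$, the $\Sigma$-GMT'' is provable in $\PICAZERO$ schematically. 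Applying $\Pi^1_1$-reflection for $\PICAZERO$ to the $\Pi^1_1$-statement $\forall\Sigma$-GMT then promotes the scheme to the single universally quantified sentence inside the theory equivalent to bGMT, which closes the cycle.

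The key step I would carry out most carefully is the uniform formalization: one must verify that the proof of the $\Sigma$-GMT given in Graph Minors VIII goes through in $\PICAZERO$ \emph{uniformly in a code for $\Sigma$}, so that the statement ``the $\Sigma$-graph minor theorem is provable in $\PICAZERO$'' is not merely a meta-theorem quantifying over standard surfaces but is witnessed by a recursive function producing a $\PICAZERO$-proof from each surface code. This is exactly what is needed to invoke $\Pi^1_1$-reflection and conclude $\forall\Sigma$-GMT. I expect the main obstacle to lie here: the combinatorial arguments in Graph Minors IV--VIII repeatedly invoke minimal-bad-sequence constructions, tree-decomposition manipulations, and almost-embeddability analyses whose parameters depend on $\Sigma$, and one must check that each of these constructions can be performed within $\PICAZERO$ with the bounds depending only recursively on the code of $\Sigma$. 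Once this uniform formalization is secured, the $\Pi^1_1$-reflection step is routine and completes the equivalence.
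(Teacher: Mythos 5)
Your proposal is correct and takes essentially the same route as the paper. The downward chain follows from specialization and the Friedman--Robertson--Seymour equivalences, while the key upward step --- uniform $\PICAZERO$-provability of each $\Sigma$-GMT together with $\Pi^1_1$-reflection for $\PICAZERO$ (itself obtainable from the well-orderedness of $\psi_0(\Omega_\omega)$ over $\ACAZERO$) --- is exactly how the paper, crediting \textcite{krombholz18}, closes the cycle, with the uniform formalization of the Graph Minors IV--VIII arguments as the acknowledged technical heart.
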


The next use of strong infinitary proof-techniques is in Graph Minors XVIII \cite*{graphminorsxviii} which provides another restricted form of the graph minor theorem that facilitates the proof of the version of the graph minor theorem necessary for the second major step of the proof of the graph minor theorem outlined above. The theorem of Graph Minors XVIII \cite*{graphminorsxviii} in a sense allows one to focus on the individual pieces of the graph decomposition obtained by the excluded minor theorem, thereby avoiding the need to work with tree-decompositions. The theorem that these individual pieces of the above graph decomposition are well-quasi-ordered by the minor relation is then proved in Graph Minors XIX \cite*{graphminorsxix}. The proof of this version of the graph minor theorem requires a further very strong proof principle, namely that of $\Pi^1_2$-bar induction. In Graph Minors XX \cite*{graphminorsxx} these results are then combined to prove the full graph minor theorem. Finally, Graph Minors XXIII \cite*{graphminorsxxiii} proves the immersion theorem and a generalization of the graph minor theorem to hypergraphs in a certain sense. 

This generalization to hypergraphs can be stated as follows. For a vertex set $V$ denote by $K_V$ the complete graph on $V$, i.e. the graph with vertex set $V$ in which every two distinct vertices are connected by an edge. Then a collapse $f$ of $G_2$ to $G_1$ is a function mapping vertices of $G_1$ to disjoint connected subgraphs of $K_{V(G_2)}$ and edges of $G_1$ injectively to edges of $G_2$ so that $f(e)$ is incident with a vertex of $f(v)$ whenever $e$ is incident with $v$ for all $e\in E(G_1)$ and $v\in V(G_1)$, and further that for every vertex $v$ and every edge $e_v$ of $f(v)$ with endpoints $v_1$ and $v_2$, there must be an edge of $G_2$ that has among its endpoints the vertices $v_1$ and $v_2$. Further, if $Q$ is a well-quasi-order and the edges of $G_1$ and $G_2$ are labelled via functions $\phi_1:E(G_1)\longrightarrow Q$, $\phi_2:E(G_2)\longrightarrow Q$, then $f$ is also required to respect the edge labels of $G_1$ and $G_2$, in the sense that $\phi_1(e)\leq_Q\phi_2(f(e))$ has to hold for every edge $e\in E(G_1)$. Then Graph Minors XXIII \cite*{graphminorsxxiii} shows that the following generalization of the graph minor theorem holds.

\begin{theorem}
Let $Q$ be a well-quasi-order. Then in every infinite sequence $\left\langle G_i: i\in\N\right\rangle$ of $Q$-edge-labelled hypergraphs there are $j > i$ so that there is a collapse of $G_j$ to $G_i$ which respects the labels of $G_i$ and $G_j$.

\end{theorem}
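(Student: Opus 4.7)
The plan is to reduce this generalization to a labelled form of the graph minor theorem (Theorem 1). Given an infinite sequence $\langle G_i : i \in \N\rangle$ of $Q$-edge-labelled hypergraphs, I would first encode each $G_i$ as an ordinary finite graph $G_i^\star$ with edge labels in an auxiliary WQO $Q^\star$. The natural construction introduces a ``hub'' vertex $x_e$ for each hyperedge $e$ of $G_i$ and joins it to every vertex incident with $e$ by an edge, labelling the new edges so that hubs, original vertices and the value $\phi_i(e) \in Q$ can all be recovered from the edge labels in the neighbourhood of any vertex of $G_i^\star$. A small vertex-gadget (or equivalently a vertex-labelled version of the graph minor theorem) secures the bipartite split between hubs and originals.

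Next, I would apply a $Q^\star$-labelled graph minor theorem to the sequence $\langle G_i^\star\rangle$, producing indices $i<j$ and a label-preserving minor embedding of $G_i^\star$ into $G_j^\star$. This labelled version is not a direct consequence of Theorem 1; it is obtained by combining the unlabelled graph minor theorem with a minimal bad sequence argument in the spirit of Higman's lemma, and it is the step responsible for the bulk of the additional proof-theoretic strength. Running the minimal bad sequence construction over labelled graphs is exactly where the $\Pi^1_2$-bar induction scheme available in $\PICAZEROONETWOBIONETHREEIND$ enters, analogous to the role it plays in upgrading Kruskal's theorem to its $Q$-labelled form.

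Finally, I would decode the minor back into a collapse of $G_j$ to $G_i$. Because hubs in $G_i^\star$ are identifiable from the labels and each hub has the correct neighbourhood size, the embedding sends every hub in $G_i^\star$ to a branch set in $G_j^\star$ containing at least one hub of $G_j^\star$, yielding the required injective map from the edges of $G_i$ to the edges of $G_j$. The original vertices of $G_i$ are sent to pairwise disjoint branch sets; contracting the hub structure shows that each such branch set corresponds, in $K_{V(G_j)}$, to a connected subgraph whose edges are all covered by hyperedges of $G_j$, exactly as the definition of a collapse demands. Preservation of the $Q$-component of the edge labels gives $\phi_i(e) \leq_Q \phi_j(f(e))$. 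The main obstacle is the labelled graph minor theorem itself; once it is in place the encoding and decoding are essentially bookkeeping.
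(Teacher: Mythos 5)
The paper itself does not supply a proof of this theorem; it states it as a result of Robertson and Seymour (Graph Minors XXIII) and analyzes the proof-theoretic resources their argument consumes, so there is no ``paper's own proof'' to match your attempt against in a narrow sense. Judged on its own merits, your sketch contains a gap that cannot be waved away as bookkeeping. The pivotal step is the claim that the $Q^\star$-labelled graph minor theorem ``is obtained by combining the unlabelled graph minor theorem with a minimal bad sequence argument in the spirit of Higman's lemma.'' No such reduction is known, and it almost certainly does not exist in that simple form: the labelled graph minor theorem and the hypergraph collapse theorem are proved in Graph Minors XXIII by carrying labels through essentially the whole structural machinery, not by wrapping a Nash--Williams minimal bad sequence argument around the unlabelled theorem used as a black box. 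Minimal-bad-sequence arguments upgrade tree well-quasi-orderings because a tree can be decomposed at its root and one can recurse via Higman's lemma on the forest of subtrees; general graphs admit no such canonical decomposition, and the unlabelled GMT alone gives no purchase on where a putative minimal bad sequence of labelled graphs could be improved. The load-bearing lemma of your reduction is therefore unsupported.

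There is also a secondary problem in the decoding step. The incidence-graph construction (one hub per hyperedge) is bipartite, so a branch set in a minor embedding of $G_i^\star$ into $G_j^\star$ may alternate between hubs and original vertices, and the single edge of $G_j^\star$ witnessing adjacency between the branch set of a hub $x_e$ and the branch set of a vertex $v$ need not touch the particular hub of $G_j^\star$ you select as $f(e)$. The collapse definition requires that the hyperedge $f(e)$ itself meet the set $f(v)\subseteq V(G_j)$, while the minor only guarantees an edge somewhere between the two branch sets, possibly between the wrong pair of representatives. Repairing this needs extra structure in the encoding and a normalization of branch sets (e.g.\ contracting each hub's branch set to a star around a single chosen hub), none of which is routine; indeed, Robertson and Seymour prove the hypergraph statement directly and deduce the labelled graph minor and immersion theorems from it, the opposite direction to the one you propose.
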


Further, Graph Minors XXIII \cite*{graphminorsxxiii} also proves that similar labelled versions of the graph minor and immersion theorem hold. If $Q$ is a well-quasi-order and $\phi_1:E(G)\longrightarrow Q$, $\phi_2:E(G)\longrightarrow Q$ are labelling functions for the edges of $G_1$ and $G_2$, then a minor relation $G_1\leq G_2$ via an expansion $f$ is said to respect these labels if $\phi_1(e)\leq_Q \phi_2(f(e))$ for every edge $e\in G_1$. Similarly, for vertex-labelling functions $\phi_1:V(G)\longrightarrow Q$, $\phi_2:V(G)\longrightarrow Q$ the minor relation is said to respect the labels if for every $v\in V(G_1)$ there is a $v'\in f(v)$ so that $\phi_1(v)\leq_Q \phi_2(v')$. If $\phi_1$ and $\phi_2$ are vertex-labelling functions from a well-quasi-order $Q$ of $G_1$ and $G_2$ respectively, say that an immersion $f$ respects this labelling if $\phi_1(v)\leq_Q\phi_2(f(v))$ for every $v\in V(G)$. Then the labelled graph minor and immersion theorem are true as well.

\begin{theorem}[Labelled graph minor theorem]
Let $Q$ be a well-quasi-order and let $\left\langle G_i:i\in\N\right\rangle$ be a sequence of $Q$-vertex- and edge-labelled graphs. Then there are $i < j$ and a minor expansion $f:G_i\longrightarrow G_j$ that respects the labels of $G_i$ and $G_j$. 

\end{theorem}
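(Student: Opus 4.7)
The plan is to derive the labelled graph minor theorem from the $Q$-edge-labelled hypergraph theorem stated immediately above by a gadget construction that encodes vertex labels as edge labels. For simple graphs the hypergraph collapse relation essentially coincides with the minor relation, so the hypergraph theorem already supplies the edge-labelled case; only the vertex labels require additional encoding.

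Concretely, I would extend the well-quasi-order $Q$ to $Q' = Q \times M$, where $M$ is a small finite antichain used to mark the role of each edge (original edge versus gadget edge of a particular type). The product of two well-quasi-orders is again a well-quasi-order, so $Q'$ qualifies. For each $Q$-vertex- and edge-labelled graph $G_i$, construct an auxiliary $Q'$-edge-labelled graph $\hat{G}_i$ by relabelling each original edge of $G$-label $q$ as $(q,0)$ and attaching to each vertex $v$ of label $\phi(v)$ a small rigid gadget whose edges carry marker values in $M\setminus\{0\}$ paired with the $Q$-value $\phi(v)$. Because $M$ is an antichain, any $Q'$-label-respecting minor embedding must send original edges to original edges and gadget edges to gadget edges of the corresponding marker type.

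Next, I would apply the labelled hypergraph theorem to $\langle \hat{G}_i\rangle$ to obtain $i<j$ and a label-respecting collapse of $\hat{G}_j$ to $\hat{G}_i$, which, since the $\hat{G}_i$ are simple graphs, amounts to a label-respecting minor expansion $f:\hat{G}_i\longrightarrow\hat{G}_j$. By rigidity of the gadget, $f$ must send the gadget at each vertex $v$ of $G_i$ onto the gadget of some vertex $v'$ in $G_j$ with $\phi(v)\leq_Q\phi(v')$, and the branch set $f(v)$ must contain $v'$. Restricting $f$ to $V(G_i)$ and $E(G_i)$ then yields the desired label-respecting minor expansion $G_i\longrightarrow G_j$, in which edge labels are respected by construction of the $(q,0)$-component and vertex labels by the gadget pinning.

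The main obstacle is designing the gadget so as to be rigid enough to force the pinning while remaining simple enough that its removal cleanly restricts to a minor expansion of the original graphs; a short asymmetric tree attached at each vertex, using two or three distinct markers in $M$, can be made to work, but verifying the pinning property is a finitary combinatorial check that must be carried out in the base theory. No additional proof-theoretic strength beyond that required for the hypergraph theorem itself is needed, so the whole argument stays within $\PICAZERO+\Pi^1_3$-$\text{IND}+\Pi^1_2$-$\text{BI}$.
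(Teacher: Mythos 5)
The paper does not give a proof of this theorem; it is stated as a result of Graph Minors XXIII, and the surrounding text only discusses the proof-theoretic strength of the argument there (another $\Pi^1_2$-bar induction of the kind analysed for Graph Minors XIX). So there is no proof in the paper against which to compare your reduction. The only thing the paper asserts about how the labelled results are obtained is that they come from the same Graph Minors XXIII machinery as the hypergraph collapse theorem, not that one is deduced from the other.

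Taken on its own terms, your reduction has the right shape but the key step is asserted rather than proved, and as stated it does not go through. You claim that a label-respecting minor expansion $f:\hat{G}_i\longrightarrow\hat{G}_j$ ``must send the gadget at each vertex $v$ of $G_i$ onto the gadget of some vertex $v'$ in $G_j$ \ldots and the branch set $f(v)$ must contain $v'$.'' That is exactly the point that needs a combinatorial argument, and it is false for the simplest gadgets. The antichain markers do force each gadget edge $e$ at $v$ to map to a gadget edge $f(e)$ of the same marker type, hence to a gadget edge attached at some $u$ with $\phi(v)\leq_Q\phi(u)$; but a gadget edge $u\,x_u$ has two endpoints, and the definition of a minor expansion only tells you that one of $\{u,x_u\}$ lies in $f(v)$ while the other lies in the branch set of the gadget vertex of $\hat{G}_i$. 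If a gadget is a single pendant leaf, there is nothing to stop $x_u\in f(v)$ and $u\in f(w_1)$, in which case $f(v)\cap V(G_j)$ may be empty or may meet $V(G_j)$ only at vertices whose labels do not dominate $\phi(v)$, and the restriction of $f$ to $G_i$ fails to respect vertex labels. A more careful gadget (e.g.\ two leaves at each vertex with two distinct markers) can be made to force the pinning, using connectedness and disjointness of branch sets together with the fact that pendant subtrees in $\hat{G}_j$ are attached at a single cut vertex, but this is a real argument that must be supplied; your ``short asymmetric tree'' is not obviously of a shape for which it works. One also has to argue separately that the collapse relation restricted to simple graphs is exactly the minor relation, that intersecting a connected branch set of $\hat{G}_j$ with $V(G_j)$ again yields a connected set (this does hold because the gadgets are trees hanging off single attachment vertices), and that isolated vertices of $G_i$ are handled. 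None of these points is addressed, so the pinning claim, which is the heart of the reduction, is left as a gap.
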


\begin{theorem}[Labelled immersion theorem]
Let $Q$ be a well-quasi-order and let $\left\langle G_i:i\in\N\right\rangle$ be a sequence of $Q$-vertex-labelled graphs. Then there are $i < j$ and an immersion expansion $f:G_i\longrightarrow G_j$ that respects the labels of $G_i$ and $G_j$.

\end{theorem}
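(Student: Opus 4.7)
The plan is to derive the labelled immersion theorem from the unlabelled Immersion Theorem stated above, combined with the well-quasi-orderedness of $Q$, by adapting the minimal-bad-sequence machinery that is already used in Graph Minors XXIII for the unlabelled case. Concretely, I would suppose for contradiction that $\langle (G_i,\phi_i) : i\in\N\rangle$ is a bad sequence, i.e.\ that there is no $i<j$ and label-respecting immersion $G_i\to G_j$, and then, using $\Pi^1_2$-bar induction (the principle flagged in the introduction as the proof-theoretic cost of the Graph Minors series' labelled wqo arguments), extract a minimal bad sequence $\langle (G_i^*,\phi_i^*)\rangle$. Minimality would be taken with respect to a well-founded measure on labelled graphs, for instance the lexicographic combination of $|V(G)|$, $|E(G)|$, and an enumeration of the multiset of labels.

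Next I would apply the unlabelled Immersion Theorem to the underlying sequence $\langle G_i^*\rangle$, iterating over tails, to produce infinitely many pairs $i<j$ each equipped with an unlabelled immersion $f_{ij} : G_i^* \to G_j^*$. By badness every such immersion has at least one \emph{bad vertex} $v_{ij}\in V(G_i^*)$ with $\phi_i^*(v_{ij})\not\leq_Q \phi_j^*(f_{ij}(v_{ij}))$. Using pigeonhole on the finite vertex sets of the $G_i^*$, together with the wqo of $Q$ and Higman's lemma applied to the finite label sequences (which is available in the base theory), I would then extract a subsequence in which the bad-vertex selection stabilizes and the labels $\phi_j^*(f_{ij}(v_{ij}))$ become monotone in $Q$. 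A classical Nash-Williams splicing step then substitutes a proper labelled substructure of some $G_i^*$, read off from this monotone pattern, into the minimal bad sequence to yield a new bad sequence strictly smaller in the measure, contradicting minimality.

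The main obstacle is the calibration of the minimality argument so that (i) the substitution step genuinely produces a smaller labelled graph whose tail is still bad and (ii) the whole derivation is formalizable within $\PICAZEROONETWOBIONETHREEIND$, which is the target upper bound. The subtlety particular to immersions, as opposed to minors, is that an immersion sends each vertex to a single vertex rather than to a branch set, so the label comparison is pointwise; this simplifies the combinatorics at each step but also means the argument cannot short-circuit through the labelled graph minor theorem obtained in Graph Minors XIX, since the two relations are incomparable. The unlabelled Immersion Theorem of Graph Minors XXIII is therefore essential and irreducible in the plan, and the only genuinely new ingredient is the use of $Q$'s wqo structure to promote the unlabelled immersions produced by it into label-respecting ones.
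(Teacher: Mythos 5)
This theorem is cited from Graph Minors~XXIII; the paper does not reprove it, but rather observes that the Robertson--Seymour proof uses another $\Pi^1_2$-bar induction and can therefore be formalised in the target upper-bound system. Your proposal attempts something different---to \emph{derive} the labelled statement from the unlabelled Immersion Theorem---and this is where the gap lies.

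The reduction in the direction you propose does not go through. In well-quasi-ordering theory the labelled version is the stronger statement (the unlabelled version is the special case $Q=\{*\}$), and one normally proves the labelled version directly precisely because the labels are load-bearing in the inductive machinery. In Graph Minors~XXIII the labels arise internally: the graphs are decomposed using the excluded minor theorem into pieces almost embeddable on a surface, and the non-embeddable parts are encoded as labels from a wqo, so the argument \emph{must} be set up with labels from the outset. Having the unlabelled conclusion in hand does not let you re-enter this induction. More concretely, your stabilisation step fails: the unlabelled Immersion Theorem gives you the existence of \emph{some} immersion $f_{ij}:G_i^*\to G_j^*$, but nothing forces any of these witnesses to be compatible with the labellings, and the set of immersions between two fixed graphs can be large and unstructured. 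Fixing a bad vertex $v$ by pigeonhole and then observing that the labels $\phi_j^*(f_{ij}(v))$ become $Q$-monotone in $j$ does not help, because the maps $f_{ij}$ for different $j$ bear no relation to one another; there is no single immersion $G_i^*\to G_j^*$ whose image at $v$ you control. Higman's lemma on the multiset of labels is similarly insufficient: an order-embedding of label sequences need not be realised by \emph{any} graph immersion, let alone a previously produced one. Finally, the ``Nash--Williams splicing'' step is left unspecified, and it is unclear what smaller labelled graph you would substitute and why its tail sequence would still be bad. The correct reading of the paper is that the labelled immersion theorem is proved in Graph Minors~XXIII by the full apparatus (structure theorem, surface embeddings, minimal counterexamples with respect to initial ideals and refinements), and the contribution here is to observe that the transfinite inductions used there are of the shape captured by $\Pi^1_2$-bar induction together with $\Pi^1_3$-induction, hence formalisable in $\PICAZEROONETWOBIONETHREEIND$.
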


In order to prove these theorems, Graph Minors XXIII \cite*{graphminorsxxiii} requires another $\Pi^1_2$-bar induction similar to that used in Graph Minors XIX \cite*{graphminorsxix}. The bar induction of Graph Minors XIX \cite*{graphminorsxix} is used when assuming that a certain class of graph embeddings is minimal with respect to certain properties, in order to prove that the above mentioned sequence of graphs embedded in a surface is good. As said above, the graphs themselves might not actually be completely embeddable in the surface, and so the non-embeddable parts are coded as labels from a well-quasi-order, to provide a (now labelled) graph that is completely embeddable into the surface. When assuming that the set of possible labels is a minimal well-quasi-order so that the set of corresponding graphs is a counterexample, one essentially performs a $\Pi^1_2$-bar induction on a well-quasi-order.

\section{Bar induction in the Graph Minors series}

More precisely, in Graph Minors XIX \cite*{graphminorsxix} two $\Pi^1_2$-bar inductions and three ordinary $\Pi^1_2$-inductions need to be performed. These inductions take the form of the assumption that there is no minimal bad counterexample to a version of the graph minor theorem. This version of the graph minor theorem is for graphs that are embedded in a fixed surface and have labels from well-quasi-orders on the edges. Further, the minor relation between these graphs is altered in such a way that edges incident with a cuff stay fixed on the surface under minor-expansions, and so that it respects the labels of the well-quasi-order. The minimal counterexample to the graph minor theorem for such graphs is then required to have as few handles, crosscaps, cuffs and edges around cuffs as possible, which correspond to the ordinary $\Pi^1_2$-inductions mentioned above, since the well-quasi-orders for the edges are not required to be the same for ``smaller'' possible counterexamples. 

The $\Pi^1_2$-bar inductions then occur when requiring that the well-quasi-orders of the counterexample are also minimal with respect to the initial ideal ordering and so-called refinement relation. We present the bar induction corresponding to the initial ideal relation in greater detail to illustrate that it can deal with the induction principle actually performed in Graph Minors XIX \cite*{graphminorsxix}; the relation corresponding to refinement can be handled analogously. As already noted, the counterexample to our version of the graph minor theorem is required to have labels from a well-quasi-order that is minimal with regard to the initial ideal relation. A well-quasi-order $X$ is an initial ideal of another well-quasi-order $X'$, denoted $X\preceq X'$, if $X\subseteq X'$ and if $X$ is closed downward with regard to $X'$, that is if 
\[\forall x\in X\forall x'\in X'(x'\leq_{X'} x\rightarrow x'\in X).\]
Assuming that the counterexample has minimal well-quasi-orders with regard to this relation then corresponds to the induction scheme 
\[\forall X(WQO(X) \rightarrow (\forall X'\prec X (\forall X''\prec X' \varphi(X'') \rightarrow \varphi(X'))\rightarrow \varphi(X))).\]
This is different from the standard bar induction scheme, which postulates that 
\[\forall X(WF(X) \rightarrow \forall j(\forall i<_X j \varphi(i)\rightarrow \varphi(j))\rightarrow \forall n\in X \varphi(n)).\]
Further, it is not clear whether the induction scheme used in Graph Minors XIX \cite*{graphminorsxix} is actually implied by the usual bar-induction scheme, and it does not seem to be the case that this initial ideal induction scheme has been considered before in the literature of reverse mathematics. Note also that due to the different kinds of quantifiers present in second order arithmetic, it may for instance occur that the initial ideal induction scheme quantifies over uncountably many predecessor objects while the ordinary bar induction scheme is constrained to only countably many predecessor objects. Inspecting the proofs of Graph Minors XIX \cite*{graphminorsxix} further, it can however be discerned that a more restricted notion of initial ideal is sufficient to carry out the proofs. In the proofs of Graph Minors XIX \cite*{graphminorsxix}, the minimality of the counterexample with regard to this initial ideal relation is only used when a whole segment above a certain element is ``cut out'' of the well-quasi-ordering, that is only the relation $\preceq_1$ defined by
\[X'\prec_1 X :\Leftrightarrow \exists \left\langle x_1,\ldots,x_n\right\rangle\in X^{<\omega}\forall x'(x'\in X' \leftrightarrow x'\in X \land \forall i < n( x'\not\geq x_i))\]
is actually used in Graph Minors XIX \cite*{graphminorsxix}. Defining a relation $\leq_1$ (in other contexts known as the Smyth quasi-order) on the finite subsets $[X]^{<\omega}$ of a well-quasi-ordered set $X$ by 
\[\{y_1,\ldots,y_n\} \leq_1 \{z_1,\ldots,z_m\} :\Leftrightarrow \forall j\in\{1,\ldots,m\}\exists i\in\{1,\ldots,n\} y_i\leq z_j,\]
 and setting $X^{z_1,\ldots,z_n} := \{x\in X:\forall i < n(x\not\geq z_i)\}$ it can be shown that bar induction for $\leq_1$ implies initial ideal induction for $\preceq_1$:

\begin{lemma}\label{1bar}
Assume that for every well-quasi-ordered set $X^*$ and every $\Pi^1_2$-formula $\varphi'(n)$ the ordinary bar induction scheme holds with regard to $[X^*]^{<\omega}$ and $\leq_1$, i.e. that
\[\forall j(\forall i<_1 j \varphi'(i)\rightarrow \varphi'(j))\rightarrow \forall n\in [X^*]^{<\omega} \varphi'(n).\]
Then also the initial ideal induction scheme holds for every well-quasi-ordered set $X$ and every $\Pi^1_2$-formula $\varphi(Y)$ with regard to $\preceq_1$, i.e.
\[\forall X'\prec_1 X (\forall X''\prec_1 X' \varphi(X'') \rightarrow \varphi(X'))\rightarrow \varphi(X).\]

\end{lemma}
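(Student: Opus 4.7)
The plan is to reduce the initial ideal induction for $\varphi(Y)$ on $X$ to the bar induction hypothesis on $([X]^{<\omega}, \leq_1)$ via the transfer $\varphi'(S) := \varphi(X^S)$, where $X^S := X^{x_1, \ldots, x_n}$ for $S = \{x_1, \ldots, x_n\} \in [X]^{<\omega}$. Since $X^S = \{x \in X : \forall i < n ~ x \not\geq x_i\}$ is arithmetical in $S$ and $X$, $\varphi'$ remains $\Pi^1_2$. The key observation is that, by the definition of $\prec_1$ given in the paper, every initial ideal $X' \prec_1 X$ has the form $X^S$ for some $S \in [X]^{<\omega}$, and the Smyth order on $[X]^{<\omega}$ correctly parametrises the $\preceq_1$ relation on such ideals.

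First I would fix $X$, $\varphi$, and the hypothesis $P: \forall X' \prec_1 X (\forall X'' \prec_1 X' \varphi(X'') \to \varphi(X'))$. Invoking the bar induction hypothesis with $X^* := X$ for $\varphi'$, I need to verify its step $\forall T \in [X]^{<\omega}(\forall S <_1 T ~ \varphi'(S) \to \varphi'(T))$. The order-theoretic lemma I would establish is the equivalence
\[
S \leq_1 T \Longleftrightarrow X^S \subseteq X^T,
\]
refined to: under $S \leq_1 T$, the set $Y := \{s \in S : \forall t \in T ~ t \not\leq s\}$ lies in $[X^T]^{<\omega}$ and satisfies $X^S = (X^T)^Y$, so that $X^S \preceq_1 X^T$; and conversely, every $X'' \prec_1 X^T$ is of the form $X^{T \cup U} = (X^T)^U$ for some non-empty $U \in [X^T]^{<\omega}$, with $T \cup U <_1 T$ strictly in $[X]^{<\omega}$. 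Granting this, fix $T$ and assume $\varphi'(S)$ for every $S <_1 T$; the converse half translates this into $\varphi(X'')$ for every $X'' \prec_1 X^T$, so that applying $P$ at $X^T$ yields $\varphi(X^T) = \varphi'(T)$. Bar induction then delivers $\varphi'(n)$ for every $n \in [X]^{<\omega}$, and in particular $\varphi'(\emptyset) = \varphi(X^\emptyset) = \varphi(X)$.

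The main obstacle is the order-theoretic correspondence, specifically the identity $X^S = (X^T)^Y$ for $Y = \{s \in S : \forall t \in T ~ t \not\leq s\}$ under $S \leq_1 T$: this requires verifying $Y \subseteq X^T$ and that the upward closures in $X$ satisfy $\{x \in X : \exists u \in T \cup Y ~ u \leq x\} = \{x \in X : \exists s \in S ~ s \leq x\}$, which rests on the observation that elements of $S$ already above some $t \in T$ lie in the upward closure of $T$ and so contribute nothing new. A secondary subtlety concerns the top case $T = \emptyset$ of the bar induction step, where applying $P$ at $X^\emptyset = X$ is justified under the paper's definition of $\prec_1$, which admits the empty-sequence witness ($n = 0$) and so includes $X \prec_1 X$; under this convention the step of $P$ at $X$ itself supplies the final conclusion $\varphi(X)$.
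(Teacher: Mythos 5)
Your proof takes essentially the same approach as the paper's: transfer initial-ideal induction to bar induction on finite subsets via the correspondence $S\mapsto X^S$, verify progressiveness of the transferred formula by identifying the $\prec_1$-predecessors of $X^T$ with the $<_1$-predecessors of $T$, and read off the conclusion at the top of the Smyth order. The only difference is cosmetic: the paper adjoins a fresh top element $\top$ to $X$ and extracts the conclusion from $\varphi'(\{\top\})$, whereas you extract it directly from $\varphi'(\emptyset)$ and correctly note that this final step amounts to invoking the progressiveness hypothesis at $X$ itself, which is licensed because the paper's definition of $\prec_1$ admits the empty-sequence witness — a reading the paper's own proof also relies on when it applies progressiveness to $X^{\{\top\}}=X$.
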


\begin{proof}
Note that if $X$ is well-quasi-ordered then $\leq_1$ is well-founded on $[X]^{<\omega}$ since a bad $\leq_1$-sequence in $X$ would in particular induce a bad $\preceq$-sequence in $X$ (see e.g. \textcite{forster03}), which is in contradiction to the well-quasi-orderedness of $X$. 

Now let $X$ be well-quasi-ordered and let $\top$ be a new element so that $\top > x$ for all $x\in X$. Define $\hat{X} := X\cup \{\top\}$. The idea for showing that the initial ideal induction scheme holds given the ordinary induction scheme is to encode the predecessors of $X$ with regard to $\preceq_1$ by finite subsets of $\hat{X}$, and to perform an ordinary bar induction on $[\hat{X}]^{<\omega}$ instead.

So assume that the usual bar induction scheme for $\Pi^1_2$-formulas with regard to $[\hat{X}]^{<\omega}$ and $\leq_1$ holds. Let $\varphi(X)$ be any $\Pi^1_2$-formula, then we need to show that $\prec_1$-initial ideal induction over $X$ holds for $\varphi$. Hence assume $\varphi$ is progressive with respect to $\prec_1$, i.e. that 
\[\forall X'\prec_1 X(\forall X''\prec_1 X'\varphi(X'')\rightarrow \varphi(X')).\] 
Then we need to show that $\varphi(X)$ holds. To do this, we define a formula $\varphi'(i)$ so that $\varphi'(\{y_1,\ldots,y_n\})$ essentially emulates $\varphi(\{x\in \hat{X}:\forall j < n: x\not\geq y_j\})$, as follows: 
\[\varphi'(i) := \forall Y(i = \{y_1,\ldots,y_n\} \rightarrow (\forall x(x\in Y\leftrightarrow x\in \hat{X}\land \forall j<n: x\not\geq_1 y_j) \rightarrow \varphi(Y))).\] 
By $\Sigma^0_0$-comprehension a set $Y$ satisfying the conditions in the antecedent always exists, and so $\varphi'$ is in fact the intended statement. Note that $\varphi'(i)$ is further still a $\Pi^1_2$-formula, and that we can thus utilize our idea to employ $\Pi^1_2$-bar induction for $\varphi'$ in order to show that $\varphi'(\{\top\})$ and hence $\varphi(X)$ holds. To this end we need to prove the progressiveness of $\varphi'$. So assume (letting $i$, $j$ be codes for finite subsets of $\hat{X}$) that $\forall i <_1 j\varphi'(i)$, then we need to show $\varphi'(j)$. 

For this, we first show that $\forall i <_1 j\varphi'(i)$ implies $\forall X''\prec_1 X^j \varphi(X'')$. But if $j = \{x_1,\ldots,x_m\}$, say, then $X''\prec_1 X^j$ means that $X'' = X^{x_1,\ldots,x_m,z_1,\ldots,z_k}$ for some $z_1,\ldots,z_k$, and trivially $\{x_1,\ldots,x_m,z_1,\ldots,z_k\} <_1 \{x_1,\ldots,x_m\}$, where the inequality must be strict since $X''\prec_1 X^j$. Let $i = \{x_1,\ldots,x_m,z_1,\ldots,z_k\}$. Then $\varphi'(i)$ holds since we assumed $\forall i <_1 j\varphi'(i)$, and since $X^i = X''$ we can infer that $\varphi(X'')$ holds as well. 

So we have shown that $\forall X''\prec_1 X^j \varphi(X'')$. Since $\varphi$ was assumed to be progressive with regard to $\prec_1$, this gives $\varphi(X^j)$ and therefore $\varphi'(j)$. This is what we needed to show for $\varphi'$ to be progressive. Since $\varphi'$ is progressive we can apply $\Pi^1_2$-bar induction on $\varphi'$ to obtain $\forall x\in [\hat{X}]^{<\omega} \varphi'(x)$. This gives us in particular $\varphi'(\{\top\})$, which in turn implies $\varphi(X)$ and thus completes the proof.

\end{proof}

In the above, finite sets of elements of $X$ are used to code the appropriate subsets of $X$. For the bar induction corresponding to the refinement relation, a finite sequence of such finite sets is needed instead. The critical condition of the refinement relation says in a sense that the well-quasi-orders from which some of edges are allowed to be labelled can be arranged in such a way that some of those well-quasi-orders are initial ideals of others, and at most identical. More precisely, a sequence $\left\langle X_1,\ldots, X_n\right\rangle$ is a refinement of a sequence $\left\langle X_1',\ldots,X_m'\right\rangle$ if $n\geq m$ and there is a function $f:\{1,\ldots,n\}\longrightarrow\{1,\ldots,m\}$ with the property that $X_i\preceq X_{f(i)}$ for all $i\leq n$, so that additionally $X_i,X_j\prec X_{f(i)}$ whenever $f(i) = f(j)$ for $i\neq j$, and so that $X_i\prec X_{f(i)}$ for some $i$. As in the previous induction, the $\prec$-relations are not actually required in their full form and can be replaced by $\prec_1$ relations, which enables us to perform a bar-induction in order to simulate the induction corresponding to the refinement relation. We write $\left\langle X_1,\ldots, X_n\right\rangle\prec_2\left\langle X_1',\ldots,X_m'\right\rangle$ if $\left\langle X_1,\ldots, X_n\right\rangle$ is a refinement of $\left\langle X_1',\ldots,X_m'\right\rangle$. To perform the bar-induction, we need a relation corresponding to $\prec_2$. As above, denote the set of finite subsets of a set $Y$ by $[Y]^{<\omega}$, and use $\rho$ and $\sigma$ as variables for such finite subsets. Define then on $([X]^{<\omega})^{<\omega}$ a relation $<_2$ by
\begin{align*}
\left\langle \rho_1,\ldots,\rho_n\right\rangle <_2 \left\langle \sigma_1,\ldots,\sigma_m\right\rangle  
:\Leftrightarrow \ & \exists f:\{1,\ldots,n\}\longrightarrow\{1,\ldots,m\}( \\
& \quad\forall i\leq n(\rho_i \leq_1 \sigma_{f(i)})\land \exists i\leq n(\rho_i <_1 \sigma_{f(i)})\land \\
& \quad\forall i,j(i\neq j\land f(i)=f(j)\rightarrow \rho_i <_1 \sigma_{f(i)}) ).
\end{align*}

In order to be able to carry out a bar-induction along this relation, we need to show that it is well-founded. This is done in the next lemma.

\begin{lemma}
Let $X$ be a well-quasi-ordered set. Then $([X]^{<\omega})^{<\omega}$ is well-founded with regard to $\leq_2$.

\end{lemma}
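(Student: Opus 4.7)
The plan is to reduce the well-foundedness of $\leq_2$ on $([X]^{<\omega})^{<\omega}$ to that of the Dershowitz--Manna multiset ordering on finite multisets over $([X]^{<\omega},\leq_1)$. The proof of Lemma~\ref{1bar} already noted that $\leq_1$ is well-founded on $[X]^{<\omega}$ whenever $X$ is well-quasi-ordered, and it is a standard fact that the multiset ordering built over any well-founded (pre)order is itself well-founded. Since $\leq_1$ is only a preorder, I shall work throughout with its quotient $[X]^{<\omega}/{\sim_1}$, which is a genuine well-founded partial order; all of the data entering the definition of $<_2$ is visibly $\sim_1$-invariant.

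To each sequence $\vec\sigma = \langle\sigma_1,\ldots,\sigma_m\rangle$ attach the finite multiset $M(\vec\sigma) = \{\!\{[\sigma_1],\ldots,[\sigma_m]\}\!\}$ of $\sim_1$-classes of its entries. The key step is to show that $\vec\rho <_2 \vec\sigma$ implies $M(\vec\rho) <^{\mathrm{mul}} M(\vec\sigma)$. Given a witnessing $f$, every index $k\in\{1,\ldots,m\}$ falls into exactly one of three cases: (i) $k\notin\mathrm{range}(f)$, in which case $[\sigma_k]$ is deleted; (ii) $f^{-1}(k)=\{i\}$, in which case $[\sigma_k]$ is replaced by $[\rho_i]$ with $[\rho_i]\leq_1[\sigma_k]$; (iii) $|f^{-1}(k)|\geq 2$, in which case $[\sigma_k]$ is replaced by the multiset $\{[\rho_i]:f(i)=k\}$, each strictly below $[\sigma_k]$. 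The clause $\exists i(\rho_i<_1\sigma_{f(i)})$ in the definition of $<_2$ rules out the degenerate possibility that every $k$ is of type (ii) with $[\rho_i]=[\sigma_k]$, so at least one position genuinely shrinks. Letting $A$ be the multiset of those $[\sigma_k]$ that are deleted or strictly replaced and $B$ the multiset of their replacements, one has $A\neq\emptyset$ and every element of $B$ is strictly below some element of $A$, which gives $M(\vec\sigma)>^{\mathrm{mul}}M(\vec\rho)$ in the Dershowitz--Manna sense.

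An infinite $\leq_2$-descent would therefore project to an infinite strictly descending sequence in the multiset order, contradicting well-foundedness of the latter. I expect the only real obstacle to be the bookkeeping in the case analysis above, in particular ensuring that the ``equal single replacement'' subcase of (ii) really is absorbed by passing to the $\sim_1$-quotient; the remainder is a routine combination of the Dershowitz--Manna theorem with the well-foundedness of $\leq_1$ inherited from the well-quasi-orderedness of $X$.
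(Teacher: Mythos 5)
Your proof is correct, and it takes a genuinely different route from the paper's. The paper argues directly: from a hypothetical infinite $\leq_2$-descent it builds a forest whose nodes are occurrences of elements $\rho^i_k$ (with a careful term-identification step to glue ``unchanged'' occurrences together), shows this forest is finitely branching with finitely many roots, and then applies K\"onig's lemma to extract an infinite strictly $<_1$-decreasing chain, contradicting well-foundedness of $<_1$. You instead package essentially the same combinatorial content by mapping a sequence $\vec\sigma$ to the multiset of $\sim_1$-classes of its entries and observing that $<_2$ projects to a strict Dershowitz--Manna step; the case analysis on $f^{-1}(k)$, together with the clause $\exists i\,(\rho_i <_1 \sigma_{f(i)})$, is exactly what guarantees the removed part is nonempty and that each inserted class sits strictly below some removed class. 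What your route buys is modularity and hygiene: you avoid the paper's somewhat delicate bookkeeping of identifying repeated occurrences (a point where literal set equality versus $\sim_1$-equivalence has to be tracked), and you inherit well-foundedness from a single standard lemma. What the paper's route buys is self-containedness, which matters because the lemma is ultimately to be situated in a fixed subsystem of second-order arithmetic; if you were to use the multiset route there you would additionally need to check that the Dershowitz--Manna theorem is itself available in the ambient base theory (it is -- the usual K\"onig's-lemma proof of it goes through in $\ACAZERO$ -- but that check is doing silently the same work the paper's proof does explicitly). One small presentational point: your reduction does not actually require $<_2$ to be $\sim_1$-invariant as a relation on sequences; what is used is only that the map $\vec\sigma \mapsto M(\vec\sigma)$ carries $<_2$ into $<^{\mathrm{mul}}$, and that is exactly what your case analysis establishes.
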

\begin{proof}
Because $X$ is well-quasi-ordered, $[X]^{<\omega}$ is well-founded with regard to $<_1$ by the remarks in the proof of the above lemma. Our aim is to employ K\"onig's lemma in order to show that there can be no infinite descending $\leq_2$-sequence in $([X]^{<\omega})^{<\omega}$. Thus if $\left\langle \rho_1,\ldots,\rho_n\right\rangle <_2 \left\langle \sigma_1,\ldots,\sigma_m\right\rangle$ via $f$, we say that $\sigma_j$ branches into $\rho_{i_1},\ldots,\rho_{i_{m_j}}$ if $f^{-1}(j) = \{i_1,\ldots,i_{m_j}\}$ and $\rho_{i_1} <_1 \sigma_j$ (which is immediate if $f^{-1}(j)$ consists of more than one element). 

Now assume that there is a sequence $s:=\left\langle \left\langle \rho_1^i,\ldots,\rho^i_{n_i}\right\rangle:i\in\N\right\rangle$ so that $s(i) >_2 s(i + 1)$ for all $i$, and let $\left\langle f_i:\{1,\ldots,n_i\}\longrightarrow \{1,\ldots,n_{i - 1}\}\right\rangle_{i \geq 2}$ be the corresponding sequence of functions witnessing the $<_2$ relations. In order to avoid confusing duplicate elements that may appear multiple times in that sequence, we interpret each $\rho^i_k$ as a term, and identify two such terms transitively if $\rho^{i + 1}_k = \rho^i_l$ and $f_{i + 1}(k) = l$. 

We now turn toward defining the tree we want to use K\"onig's lemma on. Let $S=\{\rho^i_k:i\in\N\land k\leq n_i\}$, and for $\rho,\sigma\in S$ define $\sigma$ to be a successor of $\rho$ if at some step in $s$ an element underlying $\rho$ branches into an element underlying $\sigma$. Note that due to the definition of $<_2$ every $\rho$ can branch only once, and that it can only branch into finitely many successors. This successor relation thus defines a forest on $S$, which is infinite since $s$ is an infinite descending sequence and in which every tree is finitely branching. Since this forest consists of $n_1$ and hence finitely many trees, one of these trees must be infinite as well. We can thus apply K\"onig's Lemma to this tree to obtain an infinite, strictly decreasing $<_1$-sequence in $[X]^{<\omega}$, which is a contradiction since $[X]^{<\omega}$ is well-founded by $<_1$.

\end{proof}

Similarly to $\prec_1$-initial ideal induction, we can now prove a lemma that shows that ordinary bar induction for $\leq_2$ implies the induction scheme corresponding to refinement. This is made precise in the following lemma.

\begin{lemma}
Assume that for every well-quasi-ordered set $X^*$ and every $\Pi^1_2$-formula $\varphi'(n)$ the bar induction scheme holds with regard to $([X^*]^{<\omega})^{<\omega}$ and $\leq_2$, i.e. that 
\[\forall j(\forall i<_2 j \varphi'(i)\rightarrow \varphi'(j))\rightarrow \forall n\in ([X^*]^{<\omega})^{<\omega} \varphi'(n).\]
Then for every finite sequence of well-quasi-ordered sets $X := \left\langle X_1,\ldots, X_n\right\rangle$ and every $\Pi^1_2$-formula $\varphi(Y)$ the induction scheme corresponding to refinement 
\[(\forall X'\prec_2 X (\forall X''\prec_2 X' \varphi(X'') \rightarrow \varphi(X'))\rightarrow \varphi(X))\]
holds as well.

\end{lemma}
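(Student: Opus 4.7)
The plan is to mirror the proof of Lemma~\ref{1bar}, replacing the single augmented well-quasi-order $\hat{X}$ by a disjoint union that can record, for each component of a refinement, which original $X_i$ it is an initial ideal of. Given $X = \langle X_1,\ldots,X_n\rangle$, set $\hat{X}_i := X_i \cup \{\top_i\}$ with $\top_i$ a fresh top, and let $\hat{X}^* := \hat{X}_1 \sqcup \cdots \sqcup \hat{X}_n$. A finite disjoint union of well-quasi-orders is again a well-quasi-order and adding a top preserves this, so $\hat{X}^*$ is a well-quasi-order and the assumed bar-induction scheme is available on $([\hat{X}^*]^{<\omega})^{<\omega}$ with respect to $\leq_2$.

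Encode a refinement $\langle X_{f(1)}^{\vec y^1},\ldots, X_{f(m)}^{\vec y^m}\rangle \prec_2 X$ by $\langle \rho_1,\ldots,\rho_m\rangle$, where $\rho_k := \{\top_{f(k)}\} \cup \vec y^k \subseteq \hat{X}_{f(k)}$. Exactly as in Lemma~\ref{1bar}, the element $\top_{f(k)}$ both identifies $f(k)$ and provides a nonempty encoding of the ``no cuts'' case $\vec y^k = \emptyset$; in particular $X$ itself is encoded by $\langle \{\top_1\},\ldots,\{\top_n\}\rangle$. Call $\rho \in ([\hat{X}^*]^{<\omega})^{<\omega}$ a \emph{valid encoding} if each $\rho_k$ is contained in some $\hat{X}_{i_k}$ with $\top_{i_k} \in \rho_k$, and in that case form the decoded sequence $X_\rho := \langle X_{i_1}^{\rho_1\setminus\{\top_{i_1}\}},\ldots,X_{i_m}^{\rho_m\setminus\{\top_{i_m}\}}\rangle$ by $\Sigma^0_0$-comprehension. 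Define the $\Pi^1_2$-formula $\varphi'(\rho) := (\rho \text{ is a valid encoding}) \rightarrow \varphi(X_\rho)$, so that $\varphi'$ is vacuously true on invalid encodings and captures $\varphi$ on the decoded refinement otherwise.

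The key compatibility, following the Smyth-order calculation that underpins Lemma~\ref{1bar}, is that for finite subsets $\rho, \sigma$ of $\hat{X}_i$ both containing $\top_i$, $\rho \leq_1 \sigma$ iff the decoded initial ideals satisfy $X_i^{\rho\setminus\{\top_i\}} \preceq_1 X_i^{\sigma\setminus\{\top_i\}}$, with strict inequality corresponding to strict inequality. Applying this componentwise shows: whenever $\sigma$ is a valid encoding of $\langle Y_1,\ldots,Y_m\rangle$ and $\langle Y'_1,\ldots,Y'_{m'}\rangle \prec_2 \langle Y_1,\ldots,Y_m\rangle$ via some $g$, there is a valid encoding $\tau$ of $\langle Y'_1,\ldots,Y'_{m'}\rangle$ with $\tau <_2 \sigma$ via the same $g$. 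Consequently, if $\forall \tau <_2 \sigma \, \varphi'(\tau)$ then every $\prec_2$-predecessor of $\langle Y_1,\ldots,Y_m\rangle$ satisfies $\varphi$, and the assumed progressiveness of $\varphi$ with respect to $\prec_2$ yields $\varphi(\langle Y_1,\ldots,Y_m\rangle)$, i.e.\ $\varphi'(\sigma)$; progressiveness on invalid $\sigma$ is automatic. Bar induction then gives $\forall \rho\, \varphi'(\rho)$, and specialising to $\rho = \langle \{\top_1\},\ldots,\{\top_n\}\rangle$ extracts $\varphi(X)$.

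The main obstacle I anticipate is the strictness bookkeeping in the correspondence between $<_2$ and $\prec_2$: the definition of $\prec_2$ couples a single ``at least one strict'' clause with strictness on every non-injective fibre of $f$, and I will need to verify carefully that each $\leq_1$ vs.\ $<_1$ pattern between the encoded finite sets translates back to the corresponding $\preceq_1$ vs.\ $\prec_1$ pattern between initial ideals, so that the witnessing function $g$ can be reused unchanged when passing from $\prec_2$-predecessors of $X_\sigma$ to $<_2$-predecessors of $\sigma$.
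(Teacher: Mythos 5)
Your proof is correct and is the adaptation the paper intends: the paper's own argument here is simply the remark that it is ``essentially the same as the one for Lemma~\ref{1bar}'', and your construction --- a disjoint union of the per-component augmented orders $\hat{X}_i$, Smyth-order encodings tagged by the $\top_i$, and progressiveness transfer exactly as in that lemma --- is the natural fleshing-out of that remark. The strictness bookkeeping you flag does go through: for finite $R,S\subseteq X_i$ one checks that $\{\top_i\}\cup R \leq_1 \{\top_i\}\cup S$ is equivalent to $X_i^{R} \preceq_1 X_i^{S}$ (a witnessing cut-set being $R\cap X_i^{S}$), and strictness matches because any fresh cut-point drawn from $Y_{g(l)} = X_{f(g(l))}^{R_{g(l)}}$ is automatically $\not\geq$ every element of $R_{g(l)}$, so the witnessing map $g$ can indeed be reused unchanged.
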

\begin{proof}
The proof is essentially the same as the one for Lemma \ref{1bar}.

\end{proof}

This shows that the critical parts of Graph Minors XIX \cite*{graphminorsxix} can be dealt with by a $\Pi^1_2$-bar induction. A similar induction is performed in the proof of the immersion theorem in Graph Minors XXIII \cite*{graphminorsxxiii} that can be dealt with by the same techniques. To give an overview, based on unpublished research we have the following placements of proof-theoretic strength:

\begin{enumerate}[a)]
\item $|\Pi^1_1{-}{\mathbf{CA}}_0| =\psi_0(\Omega_{\omega})$. 

\item $|\Pi^1_1{-}{\mathbf{CA}}_0+\Pi^1_2\mbox{-IND}| =\psi_0(\Omega_{\omega}{\cdot}\omega^{\omega})$. 

\item$|\Pi^1_1{-}{\mathbf{CA}}| =\psi_0(\Omega_{\omega}{\cdot}\varepsilon_0)$. 

\item $|\Pi^1_1{-}{\mathbf{CA}}_0+\Pi^1_2\mbox{-BI}| = \psi_0(\Omega_{\omega}^{\omega})$. 

\item $|\Pi^1_1{-}{\mathbf{CA}}_0+\Pi^1_2\mbox{-BI}+\Pi^1_3\mbox{-IND}| = \psi_0(\Omega_{\omega}^{\omega^{\omega}})$. 

\item $\psi_0(\Omega_{\omega})\;<\;\mbox{ordinal of graph minor and immersion theorems}\;\leq \; \psi_0(\Omega_{\omega}^{\omega^{\omega}})$.

\end{enumerate}

\section{Possible lower bound improvements}

To narrow down the corridor in which the proof-theoretic strength of the theorems considered above lies, one might try to increase their lower bounds. The immersion theorem with well-quasi-ordered labels seems to be particularly suited for such a task, since it almost imposes an approach similar to that of Friedman's extended Kruskal's theorem $EKT$\cite*{simpson85}. There, a function is used to relate labelled trees ordered by embedding with gap-condition to ordinals from the ordinal notation system $OT(\Omega_\omega)$. This ordinal notation system is used for the ordinal analysis of $\PICAZERO$, which shows that $\left|\PICAZERO\right| = \Psi_0(\Omega_\omega)$, and derived from the set $C_0(\Omega_\omega)$ from \textcite{buchholz86}. In \textcite{simpson85} it is then shown that the above approach yields:

\begin{theorem}
$\ACAZERO\vdash EKT\rightarrow WO(\Psi_0(\Omega_\omega))$. In particular, $EKT$ is not provable in $\PICAZERO$.

\end{theorem}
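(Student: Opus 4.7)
The plan is to prove the implication by contraposition. If $WO(\Psi_0(\Omega_\omega))$ fails, then there is an infinite descending sequence $\alpha_0 > \alpha_1 > \alpha_2 > \ldots$ within the ordinal notation system $OT(\Omega_\omega)$. I would translate this into an infinite sequence $\langle T_i:i\in\N\rangle$ of finite trees with labels drawn from a fixed finite set $\{0,1,\ldots,k\}$ so that for no $i < j$ does $T_i$ gap-embed into $T_j$, contradicting $\EKT$. The ``in particular'' clause then follows because $\Psi_0(\Omega_\omega)$ is the proof-theoretic ordinal of $\PICAZERO$, hence $WO(\Psi_0(\Omega_\omega))$ --- and therefore also $\EKT$ --- is not provable in $\PICAZERO$.

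The central device is a translation $\tau$ from $OT(\Omega_\omega)$ to labelled finite trees with the order-reflection property
\[\tau(\alpha)\leq_{\mathrm{gap}}\tau(\beta)\;\Longrightarrow\;\alpha\leq\beta.\]
Following the standard Friedman-style construction, I would define $\tau$ recursively on the Buchholz normal form: $\tau(0)$ is a single node labelled $0$; the translation of a sum $\alpha_1+\cdots+\alpha_m$ in decreasing order is obtained by hanging $\tau(\alpha_1),\ldots,\tau(\alpha_m)$ under a fresh root labelled $0$; and $\tau(\psi_n(\beta))$ is the tree with a root labelled $n+1$ carrying a single child $\tau(\beta)$. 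The labels are chosen so that the gap condition --- which requires every label on the path from $f(v)$ to the image of its parent to be at least as large as the label of $f(v)$ --- mirrors the closure condition $\beta\in C_n(\gamma)$ governing the collapsing functions $\psi_n$. The reflection property would then be established by simultaneous induction on $|\tau(\alpha)|+|\tau(\beta)|$, splitting into cases on the outermost symbol in the normal form of $\beta$. Once in place, applying $\tau$ componentwise to the descending sequence of notations yields the required bad sequence of trees.

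I expect the main obstacle to be the careful case analysis inside the reflection proof, particularly when $\beta = \psi_n(\beta')$. There one has to show that any gap-embedding $f:\tau(\alpha)\longrightarrow \tau(\beta)$ either lands entirely below the root of $\tau(\beta)$, which reduces to the induction hypothesis, or uses the root, in which case the gap condition constrains the image of $\tau(\alpha)$ to a region where only labels $\geq n+1$ appear on paths to the root. Translated back through $\tau$, this forces the constituents of $\alpha$ to lie in $C_n(\beta')$, which is exactly what is needed to conclude $\alpha\leq \psi_n(\beta')$ from the $OT(\Omega_\omega)$ comparison rules. Matching the label conventions, the treatment of sums, and the precise form of the gap condition into a single clean inductive argument is the delicate combinatorial heart of the proof, whereas the formalisation in $\ACAZERO$ is routine once a fixed arithmetical coding of $OT(\Omega_\omega)$ is adopted.
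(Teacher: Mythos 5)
Your plan matches the approach the paper ascribes to Simpson (1985) and later makes explicit in the more general $GKT_\omega$ setting: a correspondence between ordinal notations in $OT(\Omega_\omega)$ and labelled finite trees, calibrated so that gap-embeddability respects the ordinal ordering. The paper phrases this as a partial assignment $o$ from trees to ordinals with the ``projection'' property $T_1\leq_{\mathrm{gap}} T_2 \Rightarrow o(T_1)\leq o(T_2)$; you phrase it as a map $\tau$ from ordinals to trees with the ``reflection'' property $\tau(\alpha)\leq_{\mathrm{gap}}\tau(\beta)\Rightarrow\alpha\leq\beta$. Since $\tau$ is (morally) a section of $o$ picking out the canonical tree for each normal form, these are the same argument read in opposite directions, and either gives the contrapositive derivation of a bad tree sequence from a bad ordinal sequence. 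Your identification of the $\psi_n(\beta')$ case as the crux, where the gap condition has to mirror the closure clause $\alpha\in C_n(\beta')$, is exactly right.

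One detail is off. You claim the bad sequence of trees has labels from a fixed finite set $\{0,\ldots,k\}$. That is not available in general: ordinals strictly below $\psi_0(\Omega_\omega)$ have notations that use collapsing functions $\psi_n$ with unbounded indices $n$, and a single descending sequence can exercise arbitrarily large $n$. So either the trees must carry labels from all of $\omega$ (which is what the paper's $GKT_\omega$ trees do, via $l:V(T)\to\omega$), or you must quantify over $k$ and use the $\forall k$-form of the extended Kruskal theorem, which is what ``$EKT$'' denotes here. This is a presentational slip rather than a gap in strategy, but as written the reduction would only refute well-orderedness below some $\psi_0(\Omega_m)$ rather than below $\psi_0(\Omega_\omega)$.
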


Similar to $EKT$, a principle $GKT_\omega(Q)$, denoting generalized Kruskal's theorem with labels from $\omega$ and additional well-quasi-ordered labels from a well-quasi-order $Q$, can be defined as follows. First, the objects related to this principle are rooted trees $T$ that have two labelling functions associated with them, one function $l:V(T)\longrightarrow \omega$ and another function $l_Q:V(T)\longrightarrow Q$. They are ordered by embeddings $f:T_1\longrightarrow T_2$ that satisfy the gap-condition 
\[\forall x\in V(T_1)\forall y\in V(T_2)(y\leq f(x)\land \lnot\exists z\in V(T_1)(z< x\land y\leq f(z))\rightarrow l(y)\geq l(x)),\]
and additionally respect the labels from $Q$ in the sense that
\[\forall x\in V(T_1)(l_Q(x)\leq l_Q(f(x))).\]
For any vertex $v\neq root(T)$ in such a tree, if $w$ is the first vertex on the path from $v$ to $root(T)$, we define $T^v$ to be the component of $T\setminus w$ which includes $v$, and set $root(T^v) := v$. Then one can relate ordinals to a subset of these trees, by decreeing that the well-quasi-order $Q$ have the form $Q = W_Q\cup\{+,\omega^\cdot,\psi\}$, where $W_Q$ is a well-order and the elements of $\{+,\omega^\cdot,\psi\}$ are incomparable to all others, in the following way. First, we need an ordinal notation system $OT(\Omega_\omega \cdot W)$ from \textcite{rathjenthompson} which relativizes $OT(\Omega_\omega)$ by putting $\sup(W)$ many copies of $\Omega_\omega$ above $\Omega_\omega$. Interpret a well-order $W$ as an ordinal and for $w\in W$ set $\overline{w}:= \Omega_\omega \cdot (1 + w)$. Define then sets  $C^W_m(\alpha)$, $m\in\N$, and collapsing functions $\psi^W_m (\alpha)$, $m\in\N$ by induction on $\alpha$. Let $C^W_m(\alpha)$ be the least set $C\supseteq \Omega_m\cup\{\Omega_i:i\in \N\}\cup \{\overline{w}:w\in W\}$ so that: 
\begin{itemize}
\item $C\cap\Omega_\omega$ is closed under $+$ and $\omega^\cdot$, 

\item $\overline{w} + \alpha\in C$ whenever $w\in W$ and $\alpha \in C\cap\Omega_\omega$, and

\item $C\cap\alpha$ is closed under $\psi_n$ for all $n\in\N$.

\end{itemize}

Then we can define $\psi^W_m (\alpha)$ by
\[\psi^W_m (\alpha) := \min\{\xi : \xi\notin C^W_m(\alpha)\}.\]

We also write $\psi_m$ instead of $\psi^W_m$ if no confusion is possible. The proof-theoretic ordinal of $\PICA$ in terms of these collapsing functions is then $\psi_0(\Omega_\omega \cdot \varepsilon_0)$. Let $w' := \sup(W)$. In the following we will always assume that ordinals are in normal form with regard to the ordinal notation system $OT(\Omega_\omega\cdot\ W)$ that corresponds to $C_0\left(\overline{w'}\right)$; see \textcite{rathjenthompson} for details.

To define the ordinal related to a tree, we additionally assume that $W$ has a special element $w_0$ so that $w_0 < w$ for all $w\in W\setminus\{w_0\}$ (normally $w_0$ would correspond to $0$, but we need it to be ``less than'' $0$). We then define $\psi_m(w_0) := \Omega_m$, and to simplify notation, we define further $\psi_m(w + \alpha) := \psi_m(\overline{w} + \alpha)$ for all $w\in W\setminus\{w_0\}$. A tree $T$ can then be assigned an ordinal $o(T)$ from $OT(\Omega_\omega \cdot W)\cap \Omega_\omega$ as follows:

\begin{itemize}
\item If $l_Q(root(T))\in W$ and $root(T)$ has no successor, then set $o(T) := \psi_n (w)$, where $n = l(root(T))$ and $w = l_Q(root(T))$.

\item If $l_Q(root(T))\in W\setminus\{w_0\}$ and $root(T)$ has one successor $v$, then set $o(T) := \psi_n (w + o(T^v))$, where $n = l(root(T))$ and $w = l_Q(root(T))$.

\item If $l_Q(root(T)) = +$ and $v_1$, $v_2$ are the successors of $root(T)$ ordered so that $o(T^{v_1}) \geq o(T^{v_2})$, then set $o(T) := o(T^{v_1}) + o(T^{v_2})$.

\item If $l_Q(root(T)) = \omega^\cdot$ and $v$ is the successor of $root(T)$, then set $o(T) := \omega^{o(T^{v})}$.

\item If $l_Q(root(T)) = \psi$ and $v$ is the successor of $root(T)$, then set $o(T) := \psi_n o(T^{v})$, where $n = l(root(T))$.

\item If none of these cases can be applied, $T$ is not assigned an ordinal.

\end{itemize}

In the following we will restrict ourselves to trees that can be assigned an ordinal as above, and well-quasi-orders suitable for labelling those trees. Then it can be shown that:

\begin{theorem}[\textcite{krombholz18}]
Let $Q$ be a well-quasi-order and $T_1$, $T_2$ be trees as above. Then $o(T_1)\leq o(T_2)$ whenever $T_1\leq T_2$. 

In particular, $GKT_\omega(Q)$ implies the well-orderedness of $OT(\Omega_\omega \cdot W_Q)$. 

\end{theorem}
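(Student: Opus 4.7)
The proof naturally splits in two. The main technical claim is that the assignment $o$ is monotone, i.e.\ $T_1\leq T_2 \Rightarrow o(T_1)\leq o(T_2)$; the well-orderedness of $OT(\Omega_\omega\cdot W_Q)$ then follows by a standard contradiction argument. I would prove the monotonicity by induction on $|V(T_2)|$, using an embedding $f:T_1\longrightarrow T_2$ and splitting first on whether $f(root(T_1))=root(T_2)$. If not, then $f$ factors through a proper child-subtree $T_2^v$; the restriction is still a valid embedding (the gap-condition persists because the set of $T_2$-ancestors above any $f(x)$ that are not images of $T_1$-ancestors can only shrink), so induction gives $o(T_1)\leq o(T_2^v)$, and in each of the possible root-label classes of $T_2$ we have $o(T_2^v)\leq o(T_2)$ by monotonicity of $+$, of $\omega^{(\cdot)}$, and of the $\psi_n$-hierarchy together with the fact that $\psi_n(\cdot)\geq\Omega_n$.

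If $f(root(T_1))=root(T_2)$, then because $+,\omega^{\cdot},\psi$ are mutually incomparable and incomparable to $W$, the $Q$-label condition forces the two root-labels to lie in the same structural class, with the $W$-label only allowed to increase. Each class is then treated separately: the $+$ and $\omega^{\cdot}$ cases are handled by partitioning the children of $root(T_1)$ according to which child-subtree of $root(T_2)$ they embed into and invoking the inductive hypothesis, while the $\psi$- and $W$-cases additionally use the gap-condition applied at $x=root(T_1)$. Since $root(T_1)$ has no $T_1$-ancestors, the gap-condition at $x=root(T_1)$ simplifies to $l(y)\geq l(root(T_1))$ for every $T_2$-ancestor $y$ of $f(root(T_1))$; in particular $l(root(T_1))\leq l(root(T_2))$, which is exactly the inequality required to compare $\psi_{l(root(T_1))}$ with $\psi_{l(root(T_2))}$. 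The delicate step I anticipate is the $+$-case when both children of $root(T_1)$ land in the same child-subtree of $root(T_2)$: there one must exploit that the summands of $o(T_2)$ are arranged in non-increasing order and appeal to normal-form properties of ordinal sums to reassemble the correct inequality.

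For the well-orderedness part, every term of $OT(\Omega_\omega\cdot W_Q)\cap\Omega_\omega$ is built by finitely many applications of $+$, $\omega^{(\cdot)}$, $\psi_n$, and constants $\Omega_m,\overline{w}$, each of which corresponds to a possible root-label, so by induction on term complexity one constructs for every such $\alpha$ a tree $T_\alpha$ with $o(T_\alpha)=\alpha$. An infinite strictly descending sequence $\alpha_0>\alpha_1>\ldots$ in $OT\cap\Omega_\omega$ would yield a sequence $\langle T_{\alpha_i}:i\in\N\rangle$ to which $GKT_\omega(Q)$ applies, producing $i<j$ with $T_{\alpha_i}\leq T_{\alpha_j}$, hence by monotonicity $\alpha_i\leq\alpha_j$, contradicting $\alpha_i>\alpha_j$. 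This establishes well-orderedness of $OT\cap\Omega_\omega$, and the full well-orderedness of $OT(\Omega_\omega\cdot W_Q)$ is then obtained by the standard hierarchical argument transferring well-orderedness along the Bachmann-style layering $[\Omega_n,\Omega_{n+1})$.
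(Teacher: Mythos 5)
Your high-level structure — induction on $|V(T_2)|$ for monotonicity, a case split on whether $f$ preserves the root, and a surjectivity-plus-bad-sequence argument for well-orderedness — matches the standard template for reifications of this kind. However, the non-root case contains a step that is simply false.

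You assert that in every root-label class $o(T_2^v)\leq o(T_2)$, citing ``monotonicity of $+$, of $\omega^{(\cdot)}$, and of the $\psi_n$-hierarchy together with $\psi_n(\cdot)\geq\Omega_n$.'' This works when $root(T_2)$ carries $+$ or $\omega^{\cdot}$, but it fails when $root(T_2)$ is labelled by $\psi$ or by an element of $W$. In those cases $o(T_2)=\psi_n(o(T_2^v))$ (resp.\ $\psi_n(w+o(T_2^v))$) with $n=l(root(T_2))$, and $\psi_n$ is a \emph{collapsing} function: its value always lies strictly below $\Omega_{n+1}$, while the argument $o(T_2^v)$ may well lie at or above $\Omega_{n+1}$. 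For example with $n=0$ and $o(T_2^v)=\Omega_3$, you have $o(T_2)=\psi_0(\Omega_3)<\Omega_1<\Omega_3=o(T_2^v)$. So the chain $o(T_1)\leq o(T_2^v)\leq o(T_2)$ breaks down precisely where the hierarchy collapse happens, and that is the only interesting case.

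The inequality you actually need cannot be recovered from the induction hypothesis $o(T_1)\leq o(T_2^v)$ alone. What saves the proof is the gap-condition applied to \emph{every} ``new'' ancestor $y$ on the path from $root(T_2)$ to $f(root(T_1))$: these all satisfy $l(y)\geq l(root(T_1))$, which bounds the $\Omega$-level of $o(T_1)$ from above by $\Omega_{l(root(T_2))+1}$ and, more importantly, shows that $o(T_1)$ is generated by $\psi_m$-terms with indices compatible with membership in the relevant $C^W_n$-set. One then needs the characterisation $\psi_n(\beta)>\gamma \Leftrightarrow \gamma<\Omega_{n+1}\wedge \gamma\in C^W_n(\beta)$ (or a normal-form comparison lemma to the same effect) to conclude $o(T_1)<\psi_n(o(T_2^v))$. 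This requires a strengthened induction hypothesis tracking $C$-set membership along the embedding, not merely the bare inequality $o(T_1)\leq o(T_2^v)$, and that machinery is absent from your sketch. The $+$-case subtlety you flag is real but secondary; the collapsing-function issue above is the central gap.
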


From which, letting $GKT_\omega(\forall Q) := \forall Q( WQO(Q)\rightarrow GKT_\omega(Q))$, follows immediately:

\begin{theorem}
$\ACAZERO\vdash GKT_\omega(\forall Q)\rightarrow [\forall X(WO(X)\rightarrow WO(OT(\Omega_\omega\cdot X)))]$.

\end{theorem}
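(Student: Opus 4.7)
The plan is to reduce to the previous theorem by packaging the given well-order $X$ into a well-quasi-order of the required shape. Given $X$ with $WO(X)$, I would set $Q_X := X \cup \{+, \omega^\cdot, \psi\}$, taken as a disjoint union, where the order on $X\subseteq Q_X$ is the given well-order and the three extra symbols are pairwise incomparable and incomparable with every element of $X$. Thus, by construction, $W_{Q_X} = X$, exactly the decomposition demanded by $GKT_\omega$.

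The first substantive step is to verify in $\ACAZERO$ that $Q_X$ is a well-quasi-order. Given any sequence $\langle q_i : i\in\N\rangle$ in $Q_X$, either the set $I := \{i : q_i \in X\}$ is infinite, in which case the subsequence indexed by $I$ lies in the well-order $X$ and hence contains a good pair by $WO(X)$; or the complement of $I$ is infinite, in which case the pigeonhole principle applied to the three-element set $\{+,\omega^\cdot,\psi\}$ yields $i<j$ with $q_i=q_j$. Either way we get a good pair, so $Q_X$ is a wqo. This closure-under-finite-adjunction argument is elementary and readily formalizable in $\ACAZERO$.

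From here the conclusion is one line: applying the hypothesis $GKT_\omega(\forall Q)$ to the instance $Q = Q_X$ gives $GKT_\omega(Q_X)$, and the previous theorem then yields $WO(OT(\Omega_\omega\cdot W_{Q_X})) = WO(OT(\Omega_\omega\cdot X))$. Since $X$ was an arbitrary well-order, the displayed implication follows, all within $\ACAZERO$.

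The only point requiring care, and the one I expect to be the main bookkeeping obstacle rather than a genuine mathematical difficulty, is the technical stipulation from the construction of $OT(\Omega_\omega\cdot W)$ that $W$ possess a distinguished minimum $w_0$ (used to set $\psi_m(w_0):=\Omega_m$). If $X$ does not already furnish such an element, I would first pass to $X' := \{w_0\}\oplus X$, noting that $WO(X)\to WO(X')$ is trivial in $\ACAZERO$ and that $OT(\Omega_\omega\cdot X)$ sits as an initial segment of $OT(\Omega_\omega\cdot X')$, so well-orderedness transfers back. All other set-up (the normal-form conventions of $OT(\Omega_\omega\cdot W)$, the definitional use of $\overline{w}=\Omega_\omega\cdot(1+w)$) is unaffected by the reduction.
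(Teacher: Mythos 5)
Your proposal is correct and spells out precisely the reduction the paper has in mind when it says the theorem ``follows immediately'' from the preceding one: package the given well-order $X$ into $Q_X := X \cup \{+,\omega^\cdot,\psi\}$, verify that $Q_X$ is a well-quasi-order (well-order plus finitely many incomparable elements), and apply $GKT_\omega(Q_X)$ together with the prior theorem to obtain $WO(OT(\Omega_\omega\cdot X))$. Your care with the $w_0$ stipulation is a sensible precaution, and indeed harmless, since the definitions $\psi_m(w_0):=\Omega_m$ and $\psi_m(w+\alpha):=\psi_m(\overline{w}+\alpha)$ for $w\neq w_0$ ensure that adjoining $w_0$ below $X$ does not alter the resulting notation system.
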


Then, observing that $\left|\PICA\right| = \Psi_0(\Omega_\omega\cdot \varepsilon_0)$,  we get stronger lower bounds on $GKT_\omega(\forall Q)$ (and in fact even $GKT_\omega(\varepsilon_0)$).

\begin{corollary}
$\PICAZERO + GKT_\omega(\forall Q)$ proves $WO(\psi_0(\Omega_\omega\cdot\varepsilon_0))$.

\end{corollary}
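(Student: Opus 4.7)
The plan is to instantiate the preceding theorem at $X = \varepsilon_0$. In $\PICAZERO$ one trivially proves $WO(\varepsilon_0)$ --- this is available already in much weaker theories, and $\PICAZERO$ has proof-theoretic ordinal $\psi_0(\Omega_\omega)$, which vastly dominates $\varepsilon_0$. Combining this with $GKT_\omega(\forall Q)$ and the preceding theorem (which was shown to hold over $\ACAZERO$) immediately yields $WO(OT(\Omega_\omega\cdot\varepsilon_0))$.

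To extract the stated conclusion $WO(\psi_0(\Omega_\omega\cdot\varepsilon_0))$, one unpacks the observation that $\psi_0(\Omega_\omega\cdot\varepsilon_0)$ is the restriction of the notation system $OT(\Omega_\omega\cdot\varepsilon_0)$ to the countable ordinals. By definition $\psi_0(\Omega_\omega\cdot\varepsilon_0) = \min\{\xi : \xi \notin C^W_0(\Omega_\omega\cdot\varepsilon_0)\}$ with $W = \varepsilon_0$, so every ordinal notation denoting an element of this initial segment already appears in $OT(\Omega_\omega\cdot\varepsilon_0)$. Hence well-foundedness of the full notation system specializes immediately to well-foundedness of $\psi_0(\Omega_\omega\cdot\varepsilon_0)$.

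For the strengthening to $GKT_\omega(\varepsilon_0)$ noted parenthetically in the corollary, I would take for $Q$ the well-quasi-order $\varepsilon_0\cup\{+,\omega^\cdot,\psi\}$, with the three formal labels made incomparable to each other and to all elements of $\varepsilon_0$. Verification that $Q$ is a well-quasi-order reduces to $WO(\varepsilon_0)$, which is available in $\PICAZERO$, so the single instance $GKT_\omega(Q)$ supplies exactly the well-quasi-orderedness hypothesis required by the preceding theorem's argument when applied with $W_Q = \varepsilon_0$.

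The argument faces no real obstacle, being essentially a repackaging of the preceding theorem once one notes that $\PICAZERO$ suffices to verify the well-orderedness of the base ordinal $\varepsilon_0$. The significance of the conclusion lies in the fact that $\psi_0(\Omega_\omega\cdot\varepsilon_0) = \left|\PICA\right|$, so this lower bound places $GKT_\omega(\forall Q)$, and in fact already $GKT_\omega(\varepsilon_0)$, strictly above $\PICAZERO$ in proof-theoretic strength.
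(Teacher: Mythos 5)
Your proof is correct and matches the argument the paper intends: instantiate the preceding theorem at $X=\varepsilon_0$, use the fact that $\PICAZERO$ proves $WO(\varepsilon_0)$, and pass from $WO(OT(\Omega_\omega\cdot\varepsilon_0))$ to $WO(\psi_0(\Omega_\omega\cdot\varepsilon_0))$ by restriction to the countable initial segment. Your handling of the parenthetical $GKT_\omega(\varepsilon_0)$ strengthening via the single instance $Q=\varepsilon_0\cup\{+,\omega^\cdot,\psi\}$ also agrees with the paper's setup.
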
 

\begin{corollary}
$\PICA\not\vdash GKT_\omega(\forall Q)$.

\end{corollary}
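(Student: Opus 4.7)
The plan is to derive a contradiction from the assumption $\PICA \vdash GKT_\omega(\forall Q)$ by combining the preceding corollary with the characterization of the proof-theoretic ordinal of $\PICA$ recorded in item c) of the overview, namely $\left|\PICA\right| = \psi_0(\Omega_\omega\cdot\varepsilon_0)$.

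Concretely, suppose for contradiction that $\PICA\vdash GKT_\omega(\forall Q)$. Since $\PICA$ extends $\PICAZERO$, the preceding corollary (which states that $\PICAZERO + GKT_\omega(\forall Q) \vdash WO(\psi_0(\Omega_\omega\cdot\varepsilon_0))$) immediately gives
\[
\PICA \vdash WO(\psi_0(\Omega_\omega\cdot\varepsilon_0)).
\]
However, by definition of the proof-theoretic ordinal of a theory, $|\PICA| = \psi_0(\Omega_\omega\cdot\varepsilon_0)$ means that $\PICA$ proves $WO(\alpha)$ for every ordinal notation $\alpha < \psi_0(\Omega_\omega\cdot\varepsilon_0)$ but not for $\alpha = \psi_0(\Omega_\omega\cdot\varepsilon_0)$ itself (otherwise the supremum would be strictly larger). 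This contradicts the displayed consequence, and so $\PICA \not\vdash GKT_\omega(\forall Q)$.

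There is essentially no obstacle beyond invoking the two inputs correctly: the preceding corollary packages the combinatorial content (the ordinal assignment $o(T)$ and the monotonicity theorem relating $o$ to the embedding relation), and the ordinal analysis of $\PICA$ is cited from the overview. The only subtlety to flag is that the jump from $\PICAZERO$ to $\PICA$ in the hypothesis is harmless because the added full induction only strengthens the theory, so the conclusion of the preceding corollary continues to hold verbatim when $\PICAZERO$ is replaced by $\PICA$.
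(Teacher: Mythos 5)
Your proof is correct and is essentially the argument the paper intends: the corollary is an immediate consequence of the preceding corollary ($\PICAZERO + GKT_\omega(\forall Q) \vdash WO(\psi_0(\Omega_\omega\cdot\varepsilon_0))$) together with the standard fact that a theory does not prove the well-orderedness of a notation for its own proof-theoretic ordinal, which for $\PICA$ is $\psi_0(\Omega_\omega\cdot\varepsilon_0)$. Your remark that passing from $\PICAZERO$ to the stronger $\PICA$ as ambient theory is harmless is exactly the right thing to note.
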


This idea might possibly be leveraged in the following way, by extending it to theorems of the Graph Minors series. Recall that an immersion of one graph $G_1$ into another graph $G_2$ is an injective function $f:G_1\longrightarrow G_2$ that maps vertices injectively to vertices and edges to edge-disjoint paths (the paths may intersect at vertices however). Given a labelled tree $T$ as in the statement $GKT_\omega(Q)$ with $Q = W_Q\cup \{+,\omega^\cdot,\psi\}$, one can then define a tree-like graph which under immersion expansion aims to behave like the labelled tree. 

Set $Q' := Q\cup\{root\}$ where $root$ is incomparable to all other elements of $Q'$, and define $V(G) := V(T)\cup \{r\}$, where $r$ is a new vertex. Set further $l_{Q'}(v) := l_Q(v)$ if $v\in V(T)$ and set $l_{Q'}(r) := root$. Connect then vertices $v$ of $G$ to their immediate predecessors by $l(v) + 1$ parallel edges, and connect $root(T)$ to $r$ by $l(root(T)) + 1$ parallel edges. We then adopt the notation $v\leq u$ if when deleting edges in $G$ until no multiple edges remain (which results in a tree), $v$ lies on the unique path from $u$ to the vertex labelled with $root$ in $G$. We also speak of predecessors and successors in $G$ with regard to this ordering. For $v$ in $V(G)$ define then $G^v$ to be the induced subgraph of $G$ with vertex-set $\{u\in V(G): v\leq u\}\cup \{r'\}$ where $r'$ is a new vertex labelled with $root$, and where $r'$ is connected to $v$ by as many edges as $v$ was connected to its immediate predecessor $p(v)$ in $G$. For vertices $v$ not labelled with $root$ set further $l(v) := \left|\{e\in E(G): \text{ $e$ connects $v$ and $p(v)$}\}\right| - 1$ (which is the same as $l(v)$ in $T$).

One can then relate an ordinal to $G$ in the obvious way, by definining $o(G)$ as follows:
\begin{itemize}
\item If the successor $v$ of $r$ is labelled from $W$ and $v$ has no successors, let $o(G) := \psi_{l(v)}(l_{Q'}(v))$.

\item If the successor $v$ of $r$ is labelled from $W$ and $v$ has a successor $w$, let $o(G) := \psi_{l(v)}(l_{Q'}(v) + o(G^w))$.

\item If the successor $v$ of $r$ is labelled with $+$, set $o(G) := o(G^{w_1}) + o(G^{w_2})$, where $w_1$ and $w_2$ are the successors of $v$ so that $o(G^{w_1})\geq o(G^{w_2})$.

\item If the successor $v$ of $r$ is labelled with $\omega^\cdot$, set $o(G) := \omega^{o(G^{w})}$, where $w$ is the successor of $v$.

\item If the successor $v$ of $r$ is labelled with $\psi$, set $o(G) := \psi_{l(v)}o(G^{w})$, where $w$ is the successor of $v$.

\end{itemize}

One could hope that $o(G_1)\leq o(G_2)$ whenever $G_1$ can be immersed into $G_2$, but sadly this result has not been established yet. When doing the proof for labelled trees, an induction on the height of the tree with additional induction hypotheses is usually used. However, aside from mapping the vertex labelled with $root$ in $G_1$ to the vertex labelled with $root$ in $G_2$, an immersion from $G_1$ into $G_2$ does not have to respect the ``tree-structure'' of $G_1$, as illustrated in figure \ref{immersion}. 

\begin{figure}[ht]
\begin{center}
\begin{tikzpicture}[transform shape, scale = 1.2]
\begin{scope}[every node/.style={circle, thick, draw, inner sep = 1 pt, minimum size = 0.5cm}]
    \node (R) at (4.5,-0.5) {$r$};
    \node (B) at (4,0.5) {$+$};
    \node (C) at (3.3,1.5) {$\varepsilon_0$};
    \node (D) at (4.7,1.5) {$\omega$};
		
		\node (R*) at (9,-1) {$r$};
		\node (A*) at (8.5,0) {$+$};
		\node (E*) at (9.2,1) {$\omega^2$};
    \node (B*) at (8,1) {$+$};
    \node (C*) at (7.3,2) {$\varepsilon_0$};
    \node (D*) at (8.7,2) {$0$};
\end{scope}

\begin{scope}[every node/.style={inner sep = 0.5 pt, minimum size = 0cm}]
\begin{scope}[every edge/.style={draw=black,very thick}]
		
		\path [-] (R*) edge (A*);
		\path [-, bend right] (R*) edge (A*);
		\path [-, bend left] (R*) edge (A*);
		\path [-] (A*) edge (B*);
		\path [-, bend right] (A*) edge (B*);
		\path [-, bend left] (A*) edge (B*);
		\path [-] (A*) edge (E*);
		\path [-, bend right] (B*) edge (C*);
		\path [-] (B*) edge (C*);
		\path [-, bend left] (B*) edge (C*);
		\path [-, bend right] (B*) edge (D*);
		\path [-, bend left] (B*) edge (D*);
		
		\path [-, bend right] (R) edge (B);
		\path [-, bend left] (R) edge (B);
		\path [-, bend right] (B) edge (C);
		\path [-, bend left] (B) edge (C);
		\path [-] (B) edge (D);
		
		\path [->, >=stealth, dashed] (R) edge (R*);
		\path [->, >=stealth, dashed] (B) edge (B*);
		\path [->, >=stealth, dashed, bend left] (C) edge (C*);
		\path [->, >=stealth, dashed, out = 45, in = 60] (D) edge (E*);

\end{scope}
\end{scope}

\end{tikzpicture}
\end{center}
\caption{One example where a valid immersion embedding does not respect ``infima'' of the graphs. The labels of the vertices are drawn inside the nodes, with $r$ used instead of $root$. The vertex map of the immersion embedding is given by the dashed arrows, with the edge map implied in the obvious way.}
\label{immersion}
\end{figure}
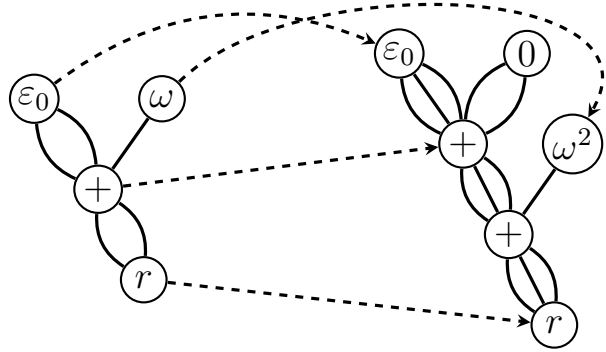

The induction hypotheses necessary for proving $o(G_1)\leq o(G_2)$ can not always be used in such a case, which makes the proof that this holds (if it should indeed hold) a lot harder. It should be noted that the immersion relation between two such graphs corresponds to a root preserving embedding $f$ between edge-labelled trees that is not order or infimum preserving (i.e. so that $f$ maps vertices injectively to vertices and edges to paths that do not have to be disjoint), that however satisfies a different gap-condition, namely that for $e\in E(G_2)$ it has to hold that $l(e)\geq \sum_{e'\in f^{-1}(e)}{l(e')}$, where $f^{-1}(e)$ denotes the set of edges $e'$ so that $e$ is an edge of $f(e')$. 

While it is not clear whether this construction works with immersions due to the above, it should be noted that it does work when using directed graphs and immersions, i.e. so that edges are directed from $u$ to $v$ if $u\leq v$ and so that an immersion expansion maps edges to edge-disjoint directed paths. However, the immersion theorem is known to not hold for the class of all directed graphs in general, and it is currently an ongoing effort in graph theory to establish for which classes of directed graphs it does hold. Thus, it is an open question whether lower bounds like these can be established for a more natural class of directed graphs, and further whether these results can be extended to undirected immersions.

\section*{Acknowledgments} 

The first author was supported by a scholarship from the University of Leeds
(``University of Leeds 110th Anniversary Scholarship'').\footnote{The opinions expressed in this publication are those of the authors and do not necessarily reflect the views of the University of Leeds.}

The second author was supported by a grant from the John Templeton Foundation
(``A new dawn of intuitionism: mathematical and philosophical advances'', ID 60842).\footnote{The opinions expressed in this publication are those of the authors and do not necessarily reflect the views of the John Templeton Foundation.}

\printbibliography

\end{document}